\documentclass{article}
\usepackage{amssymb,amsmath,amsthm,graphicx,ucs,hyperref}
\usepackage[utf8x]{inputenc}
\usepackage[english]{babel}

\newtheorem{theorem}{Theorem}
\newtheorem{corollary}{Corollary}
\newtheorem{proposition}{Proposition}
\newtheorem{lemma}{Lemma}
\newtheorem{definition}{Definition}

\title{
Weak Local Rules for Planar Octagonal Tilings
\thanks{This work was supported by the ANR project QuasiCool (ANR-12-JS02-011-01)}}

\author{
Nicolas Bédaride
\footnote{Aix Marseille Univ., CNRS, Centrale Marseille, I2M, UMR 7373, 13453 Marseille, France.}
\and
Thomas Fernique
\footnote{Univ. Paris 13, CNRS, Sorbonne Paris Cité, UMR 7030, 93430 Villetaneuse, France.}
}
\date{}

\begin{document}

\maketitle

\begin{abstract}
We provide an {\em effective} characterization of the {\em planar octagonal tilings} which admit {\em weak local rules}.
As a corollary, we show that they are all based on quadratic irrationalities, as conjectured by Thang Le in the 90's.
\end{abstract}

%%%%%%%%%%%%%%%%%%%%%%%
\section{Introduction}
\label{sec:introduction}

Non-periodic tilings have received a lot of attention since the discovery of quasicrystals in the early 80s, because they provide a model of their structure.
Two prominent methods to define non-periodic tilings are {\em substitutions} and {\em cut and projection} (for a general introduction to these methods, see, {e.g.}, \cite{Baake-Grimm-2013,Senechal-1995}).
However, to model the stabilization of quasicrystals by short-range energetic interaction, a crucial point is to know whether such non-periodic tilings admit {\em local rules}, that is, can be characterized by their patterns of a given size.\\

If one allows tiles to be {\em decorated}, then the tilings obtained by substitutions are known to (generally) admit local rules (see \cite{Fernique-Ollinger-2010,Goodman-Strauss-1998,Mozes-1989}).
It has moreover recently been proven in \cite{Fernique-Sablik-2016} that a cut and project tiling admits local rules with decorated tiles if and only if it can be defined by {\em computable} quantities.
This complete characterization goes much further than previous results ({\em e.g.}, \cite{Le-Piunikhin-Sadov-1993,Le-1992b,Le-1992c,Le-1997,Le-1995,Le-Piunikhin-Sadov-1992,Socolar-1989}) by using decorations to simulate Turing machines.
But it can hardly help to model real quasicrystals because of the huge number of different decorated tiles that it needs.\\

If one does not allow tiles to be decorated, then the situation becomes more realistic but dramatically changes.
Algebraicity indeed comes into play instead of computability.
This problem has been widely studied (see, {\em e.g.}, \cite{Beenker-1982,deBruijn-1981,Burkov-1988,Katz-1988,Katz-1995,Kleman-Pavlovitch-1987,Le-1992c,Le-1997,Levitov-1988,Socolar-1990}), but there is yet no complete characterization.
We here provide the first such characterization in the case of so-called {\em octagonal tilings}.\\

Let us here sketch the main definitions leading up to our theorem (more details are given in Section~\ref{sec:settings}).
An {\em octagonal tiling} is a covering of the plane by rhombi whose edges have unit length and can take only four different directions, with the intersection of two rhombi beeing either empty, or a point, or a whole edge.
By interpretating these four edges as the projection of the standard basis of $\mathbb{R}^4$, any octagonal tiling can be seen as a square tiled surface in $\mathbb{R}^4$, called its {\em lift}. 
It is then said to be {\em planar} if this lift lies in the neighborhood $E+[0,t]^4$ of a $2$-dimensional affine plane $E\subset\mathbb{R}^4$, called the {\em slope} of the tiling.
Unless otherwise specified, ``plane'' shall here mean ``$2$-dimensional affine plane of $\mathbb{R}^4$''.\\

On the one hand, a plane $E$ is determined by its {\em subperiods} if any other plane having the same subperiods is parallel to $E$, where a subperiod of $E$ corresponds to a direction in $E$ with at least three rational entries\footnote{We shall formally define it as a linear integer relation on three Grassmann coordinates of $E$, see Definition~\ref{def:subperiod}.}.
In other words, $E$ is determined by some algebraic constraints.\\

On the other hand, a plane $E$ is said to admit {\em weak local rules} if there is $r\geq 0$ such that, whenever the patterns of size $r$ of an octagonal tiling form a subset of the patterns of a planar octagonal tiling with a lift in $E+[0,1]^4$, then this tiling is planar with slope $E$.
In other words, $E$ is determined by some geometric constraints.\\

\noindent Our main result connects these algebraic and geometric constraints:

\begin{theorem}\label{th:main}
A plane admitting weak local rules is determined by its subperiods.
\end{theorem}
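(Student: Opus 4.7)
The approach I would take is the contrapositive. Suppose the plane $E$ is \emph{not} determined by its subperiods. Then, by definition, there exists another plane $E'$, not parallel to $E$, which shares all subperiods of $E$. My goal is to show that $E$ admits no weak local rules, which means exhibiting, for every $r \geq 0$, an octagonal tiling whose $r$-patterns all appear in planar tilings with lift in $E+[0,1]^4$ but which is not itself planar with slope $E$.

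The natural candidate is to take any planar tiling $T$ with slope $E'$. Since $E'\not\parallel E$, such a tiling cannot be planar with slope $E$. The remaining and crucial step is a pattern-comparison lemma: every $r$-pattern occurring in a planar tiling of slope $E'$ also occurs in some planar tiling of slope $E$. The link between subperiods and local patterns should be made via the cut-and-project description of planar tilings. An $r$-pattern in a planar tiling of slope $F$ corresponds to a finite configuration of lattice points falling into a bounded region of the window associated to $F+[0,1]^4$; a subperiod of $F$ is a linear integer relation on three Grassmann coordinates of $F$, which encodes a rational direction in $F$ along which the window is invariant up to a codimension-one correction. Because $E$ and $E'$ share all such relations, the windows carry the same invariances.

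The main obstacle is turning this shared algebraic invariance into a genuine inclusion of $r$-pattern languages. Fix an $r$-pattern of $T$: it corresponds to finitely many lattice points inside the $E'$-window. Using the translation freedom of $E$ and the shared subperiod relations, one must argue that $E$ can be slid parallel to itself so that these same finitely many lattice points land inside the $E$-window. The feasibility of the sliding is precisely what the subperiods control: the coordinates in $E$ of the relevant lattice points split into directions governed by subperiods (which behave identically for $E$ and $E'$) and irrational directions where $E$ and $E'$ differ; as long as $r$ is fixed, only finitely many points are involved and the irrational discrepancy does not yet matter. Once this key lemma is established, the contrapositive closes immediately: the tiling $T$ of slope $E'$ displays $r$-patterns compatible with $E$ yet is not planar with slope $E$, which contradicts the weak local rules for $E$. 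Hence weak local rules force $E$ to be determined by its subperiods.
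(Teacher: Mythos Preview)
Your contrapositive strategy hinges entirely on the ``pattern-comparison lemma'': if $E$ and $E'$ share all subperiods, then every $r$-map of a thickness-one planar tiling of slope $E'$ already occurs in some thickness-one planar tiling of slope $E$. This lemma is not proved in your sketch, and in fact it is false. The cleanest way to see this is to take two generic irrational planes with \emph{no} subperiods at all: they trivially share the same (empty) set of subperiod relations, yet their $r$-atlases differ for large $r$, since the regions $R(\mathcal{P})$ in the window that encode which patterns occur depend on the full Grassmann coordinates of the slope, not merely on the linear relations among them. More concretely, the paper's own Ammann--Beenker discussion shows a one-parameter family of planes $(1,t,1,1,2/t,1)$ all sharing the same four subperiods; perturbing $t$ breaks coincidences and produces new patterns (Figure~\ref{fig:coincidence}). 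Your heuristic that ``the irrational discrepancy does not yet matter'' for finitely many lattice points is precisely where the argument fails: whether a given finite configuration fits into the $E$-window after sliding is governed by \emph{coincidence} equations (quadratic in the $G_{ij}$, Proposition~\ref{prop:coincidence2}), which are strictly finer than subperiod equations (linear in the $G_{ij}$).

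The paper's proof takes an entirely different route. It first shows, via an explicit ``staircase'' construction (Proposition~\ref{prop:at_most_two_subperiods}), that a plane with at most two \emph{types} of subperiods cannot have weak local rules; this already requires building a non-planar tiling with the correct $r$-atlas, not merely invoking another plane. Then, assuming weak local rules, it uses Proposition~\ref{prop:coincidence} to get finitely many determining coincidences, combines three independent subperiods of distinct types (Lemma~\ref{lem:thee_types_independence}) with one such coincidence to cut the dimension to zero, and finally runs a substantial case analysis (Lemma~\ref{lem:fourth_subperiod}) to show that this forces a fourth subperiod making the subperiods alone zero-dimensional. The passage from coincidences to subperiods is the technical heart of the argument and cannot be bypassed by the shortcut you propose.
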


This characterization is actually up to {\em algebraic conjugacy} in the sense that such a plane $E$ turns out to be always generated by vectors with entries in some quadratic number field $\mathbb{Q}(\sqrt{d})$ (see Cor.~\ref{cor:quadratic}, below) and the plane $E'$ obtained by changing $\sqrt{d}$ into $-\sqrt{d}$ everywhere also has the same subperiods (but octagonal tilings with a lift in $E'+[0,t]$ may not exist).
The converse implications is the main theorem of \cite{Bedaride-Fernique-2015}, so that we get a full characterization:

\begin{corollary}\label{cor:characterization}
A plane admits weak local rules if and only if it is determined by its subperiods.
\end{corollary}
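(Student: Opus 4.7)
The plan is to assemble the corollary from two pieces, one of which is the main theorem of the paper and one of which is imported from previous work. Specifically, the ``only if'' direction---that any plane admitting weak local rules is determined by its subperiods---is exactly Theorem~\ref{th:main}, which constitutes the technical heart of the paper and will be proved in the later sections. The ``if'' direction---that a plane determined by its subperiods admits weak local rules---is the main theorem of \cite{Bedaride-Fernique-2015}, so on that side I would simply quote the cited result and give no independent argument. In this sense the corollary itself is a one-line assembly, and all the real work is concentrated in Theorem~\ref{th:main}.

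The single point that merits extra comment, and which I would therefore address in a short remark inside the proof, is the algebraic-conjugacy caveat flagged in the paragraph just before the corollary. A plane admitting weak local rules is, by Corollary~\ref{cor:quadratic}, generated by vectors with entries in some real quadratic field $\mathbb{Q}(\sqrt{d})$, and the Galois conjugate plane $E'$ obtained by sending $\sqrt{d}\mapsto-\sqrt{d}$ has exactly the same subperiods. Strictly speaking, $E'$ is not parallel to $E$ in general, so the defining condition ``any other plane having the same subperiods is parallel to $E$'' is not automatically satisfied. To reconcile this with the statement of the corollary I would either (i) read ``determined by its subperiods'' modulo Galois conjugacy, or (ii) note that the definition of weak local rules applies only to planes that actually carry planar tilings, and $E'$ generically does not, so it cannot appear as the slope of any competing tiling and is therefore invisible to the local-rule condition. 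Either reading makes the equivalence honest.

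The main obstacle, then, is not the corollary but Theorem~\ref{th:main} itself; the corollary simply inherits whatever difficulty that theorem presents. The proof I would write is therefore essentially: apply Theorem~\ref{th:main} for one direction, cite \cite{Bedaride-Fernique-2015} for the other, and append the conjugacy remark above to make the quantifier over ``other planes with the same subperiods'' precise.
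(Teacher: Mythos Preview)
Your proposal is correct and matches the paper's own treatment: the corollary is assembled by combining Theorem~\ref{th:main} for the ``only if'' direction with the main theorem of \cite{Bedaride-Fernique-2015} for the ``if'' direction, and the paper likewise flags the algebraic-conjugacy caveat (noting that the conjugate plane $E'$ may fail to carry octagonal tilings). Your discussion of how to reconcile the conjugacy issue with the quantifier in the definition is, if anything, more explicit than what the paper provides.
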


This is moreover an {\em effective} characterization.
We indeed show how to associate with any given slope a system of polynomial equations which is zero-dimensional if and only if this slope is characterized by its subperiods.
The zero-dimensionality of such a system can then be checked by computer.
We will also easily obtain as a corollary the following result:

\begin{corollary}\label{cor:quadratic}
  If a plane has weak local rules, then it is generated by vectors with entries in some quadratic number field $\mathbb{Q}(\sqrt{d})$.
\end{corollary}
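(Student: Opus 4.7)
The plan is to apply Theorem~\ref{th:main} and then read off quadraticity from the resulting algebraic system. First I would encode a plane $E\subset\mathbb{R}^4$ by its six Pl\"ucker coordinates $(p_{12}:\dots:p_{34})\in\mathbb{P}^5$, which satisfy the Pl\"ucker relation
\[
p_{12}p_{34}-p_{13}p_{24}+p_{14}p_{23}=0
\]
cutting out the Grassmannian $\mathcal{P}=\mathrm{Gr}(2,4)$. Since by Definition~\ref{def:subperiod} each subperiod of $E$ is a linear integer relation among three of the $p_{ij}$, the full set of subperiods defines a $\mathbb{Q}$-rational projective linear subspace $L\subset\mathbb{P}^5$ that contains $[E]$. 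Theorem~\ref{th:main} then says that $[E]$ is the only $\mathbb{R}$-point of $L\cap\mathcal{P}$, up to scaling.

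The next step is to upgrade this to complex zero-dimensionality. Any positive-dimensional complex component of $L\cap\mathcal{P}$ through $[E]$ is defined over $\mathbb{Q}$, so its real trace near the smooth real point $[E]$ would itself be positive-dimensional, producing non-parallel real planes with the same subperiods and contradicting Theorem~\ref{th:main}. Hence $L\cap\mathcal{P}$ is zero-dimensional, and since $\mathcal{P}$ is an irreducible quadric this forces $\dim L\le 1$. If $\dim L=0$, then $[E]$ is $\mathbb{Q}$-rational and $E$ is defined over $\mathbb{Q}$ itself. Otherwise $L$ is a $\mathbb{Q}$-rational projective line; a rational parametrization of $L$ plugged into the Pl\"ucker relation yields a degree-two polynomial over $\mathbb{Q}$ whose roots enumerate $L\cap\mathcal{P}$. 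These two roots, and therefore the Pl\"ucker coordinates of $[E]$, lie in a quadratic extension $\mathbb{Q}(\sqrt{d})$ of $\mathbb{Q}$.

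To conclude, I would invoke the standard fact that a plane in $\mathbb{R}^4$ whose Pl\"ucker coordinates lie in a field $K$ admits a basis with entries in $K$: picking any nonzero $p_{ij}$, one reads off two spanning vectors linearly from the remaining $p_{kl}$. Taking $K=\mathbb{Q}(\sqrt{d})$, with the convention that $d$ may be a square (so that $\mathbb{Q}$ itself is allowed), gives the corollary. The hard part is the real-to-complex bootstrap in the second paragraph; the existence of the effective polynomial system advertised just before the statement of the corollary is exactly what makes this rigorous, and once it is granted the remainder of the argument reduces to the quadratic formula and standard Grassmannian bookkeeping.
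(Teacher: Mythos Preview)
Your overall strategy---encode $E$ by Pl\"ucker coordinates, collect the subperiods into a rational linear subspace $L$, intersect with the Pl\"ucker quadric, and apply the quadratic formula---is exactly the paper's. Its one-sentence justification (right after the proof of Theorem~\ref{th:main}) is even terser than yours: the plane is cut out by ``linear rational equations (the subperiods) and a single quadratic rational equation: the Pl\"ucker relation.''

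Where you diverge, and where there is a gap, is in arguing that $\dim L\le 1$. You try a real-to-complex bootstrap: finiteness of $(L\cap\mathcal{P})(\mathbb{R})$ should force complex zero-dimensionality. As written this is not airtight---your claims that the component through $[E]$ is defined over $\mathbb{Q}$ and that $[E]$ is a \emph{smooth} point of $L\cap\mathcal{P}$ are unjustified, and without smoothness a real quadric through a real point can perfectly well have that point as its only real point (e.g.\ $x^2+y^2=0$ in $\mathbb{P}^2$, a cone with definite base). You acknowledge this and appeal to the paper's ``effective polynomial system,'' but that system is precisely what is built inside the proof of Theorem~\ref{th:main}, and it settles $\dim L\le 1$ by a different, more concrete route: Lemma~\ref{lem:fourth_subperiod} explicitly produces a \emph{fourth} subperiod independent of the three already known, so $L$ is cut out by four independent linear forms in $\mathbb{P}^5$ and is automatically at most a projective line. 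No bootstrap is needed; once $\dim L\le 1$ is in hand, your final paragraph and the paper's argument coincide.

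A minor correction: Theorem~\ref{th:main} determines $E$ only up to algebraic conjugacy (the paper says so immediately after stating it), so $(L\cap\mathcal{P})(\mathbb{R})$ may contain two points rather than one. This does not affect the rest of your argument.
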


This answers a conjecture of Thang Le in the 90s.
He showed in \cite{Le-1997} that if the slope of a planar tiling (planar octagonal tilings are a particular case) has weak local rules, then it is generated by vectors with entries in a common algebraic field.
He conjectured that it is a quadratic field for $2$-planes of $\mathbb{R}^4$.\\

The maximal algebraic degree is however still unknown in general.
One can show that it would be $\lfloor n/d\rfloor$ if Theorem~\ref{th:main} extends to $d$-dimensional affine planes of $\mathbb{R}^n$.
At least, there is no counter-example to our knowledge.
For example, the slope of Penrose tilings is a $2$-dimensional affine plane of $\mathbb{R}^5$ based on the golden ratio which has degree $2=\lfloor 5/2\rfloor$.
More generally, the slope of an {\em $2p$-fold tiling} ($p\geq 3$) is a $2$-dimensional affine plane of $\mathbb{R}^p$ based on an algebraic number of degree $\varphi(p)/2\leq \lfloor p/4\rfloor$, where $\varphi$ is the Euler's totient function (the Penrose case corresponds to $p=5$).
Let us also mention the {\em icosahedral tiling}, whose slope is a $3$-dimensional affine space of $\mathbb{R}^6$ based, again, on the golden ratio, of degree $2=\lfloor 6/3\rfloor$.\\

The paper is organized as follows.
Section~\ref{sec:settings} introduces the settings, providing the necessary formal definitions, in particular weak local rules and subperiods.
Section~\ref{sec:less} proves that a plane with less than three types of subperiods cannot have weak local rules.
The idea is to construct a non-planar tiling which has the same patterns of a given size as the original planar tiling.
This relies on the precise study of what happens when the slope of a planar tiling is slightly shifted (Proposition~\ref{prop:shift_flips}).
Section~\ref{sec:more} proves that if a plane has weak local rules, hence three types of subperiods, then it has necessarily a fourth subperiod (Lemma~\ref{lem:fourth_subperiod}) which, together with the three first subperiods, characterize it.
This yields the main theorem.
The proof relies on a case-study which uses the notion of {\em coincidence} (Definition~\ref{def:coincidence}) to express in algebraic terms the constraints on patterns enforced by weak local rules.

%Notons qu'en combinant Prop.~\ref{prop:sparse_flips} (il faut au moins deux sous-periodes pour forcer la planarite par local rules) avec résultat \cite{subperiods} (trois sous périodes forcent la planarité), on obtient aussi une caractérisation de la possibilité de forcer la planarité : il faut et il suffit d'avoir trois sous-périodes indépendantes. énoncer théorème ? est-ce équivalent à avoir les plans solutions en somme directe ?

%%%%%%%%%%%%%%%%%%%%%%%
\section{Settings}
\label{sec:settings}

Let $\vec{v}_1,\ldots,\vec{v}_4$ be pairwise non-colinear vectors of $\mathbb{R}^2$ and define the {\bf proto-tiles}
$$
T_{ij}=\{\lambda\vec{v}_i+\mu\vec{v}_j~|~0\leq\lambda,\mu\leq 1\},
$$
for $1\leq i<j\leq 4$.
A {\bf tile} is a translated proto-tile.
An {\bf octagonal tiling} is a edge-to-edge tiling by these tiles, that is, a covering of the Euclidean plane such that two tiles can intersect only in a vertex or along a whole edge.\\

The {\bf lift} of an octagonal tiling is a $2$-dim. surface of $\mathbb{R}^4$ defined as follows: an arbitrary vertex of the tiling is first mapped onto an arbitary point of $\mathbb{Z}^4$, then each tile $T_{ij}$ is mapped onto the unit face generated by $\vec{e}_i$ and $\vec{e}_j$, where $\vec{e}_1,\ldots,\vec{e}_4$ denote the standard basis of $\mathbb{R}^4$, so that two tiles adjacent along $\vec{v}_i$ are mapped onto faces adjacent along $\vec{e}_i$.\\

Among octagonal tilings, we distinguish {\bf planar octagonal tilings}: they are those with a lift which lies inside a tube $E+[0,t]^4$, where $E$ is a (two-dimensional) affine plane of $\mathbb{R}^4$ called the {\bf slope} of the tiling, and $t\geq 1$ is a real number called the {\bf thickness} of the tiling (both $E$ and $t$ are uniquely defined).\\

%%%%%%%%%%%%%%%%%%%%

A plane is {\bf irrational} if it does not contain any line generated by a vector with only rational entries.
By extension, a planar tiling is said to be irrational if its slope is irrational.
An irrational tiling is {\bf aperiodic} or {\bf non-periodic}, {\em i.e.}, no (non-trivial) translation maps it onto itself.
It can actually be ``more or less irrational'' because they may exist rational dependencies between the {\bf Grassmann coordinates} of its slope.
Recall (see {\em e.g.}, \cite{Hodge-Pedoe-1994}, chap.~7, for a general introduction) that the Grassmann coordinates of a plane $E$ generated by two vectors $(u_1,u_2,u_3,u_4)$ and $(v_1,v_2,v_3,v_4)$ are the six real numbers defined up to a common multiplicative factor by
$$
G_{i,j}=u_iv_j-u_jv_i,
$$
for $1\leq i<j\leq 4$.
They always satisfy the so-called {\bf Plücker relation}:
$$
G_{12}G_{34}=G_{13}G_{24}-G_{14}G_{23}.
$$
A plane is said to be {\bf nondegenerate} if its Grassmann coordinates are all non zero.
By extension a planar tiling is said to be nondegenerate if its slope is nondegenerate: this means that each of the six proto-tiles appears in the tiling.
We will implicitly consider only such planes or tilings in this paper.\\

We used Grassmann coordinates in \cite{Bedaride-Fernique-2015} to rephrase the geometric {\em SI-condition} of \cite{Levitov-1988} in more algebraic terms via the notion of {\em subperiod}:

\begin{definition}[subperiod]\label{def:subperiod}
  A {\em type $k$ subperiod} of a plane $E$ is a linear rational equation on its three Grassmann coordinates which have no index $k$.
\end{definition}

One can show (Prop. 5 of \cite{Bedaride-Fernique-2015}) that a plane $E$ has a subperiod $pG_{23}-qG_{13}+rG_{12}=0$ of type $4$ if and only if there is $x\in\mathbb{R}$ such that $E$ contains a line directed by $(p,q,r,x)$ (this is how subperiods were defined in the introduction).\\
  
Consider, for example, the celebrated Ammann-Beenker tilings.
One of them is depicted on Fig.~\ref{fig:ammann_beenker_tiling}.
They are the planar octagonal tilings of thickness one with a slope parallel to the plane generated by
$$
(\sqrt{2},1,0,-1)
\qquad\textrm{and}\qquad
(0,1,\sqrt{2},1).
$$
This plane has Grassmann coordinates $(1,\sqrt{2},1,1,\sqrt{2},1)$ (by lexocographic order).
It is irrational but has four subperiods (ordered by increasing type):
$$
G_{23}=G_{34},
\qquad
G_{14}=G_{34},
\qquad
G_{12}=G_{14},
\qquad
G_{12}=G_{23}.
$$
However, one checks that these equations and the Plücker relation do not characterize the Ammann-Beenker slope but the one-parameter family of planes with Grassmann coordinates $(1,t,1,1,2/t,1)$ (see \cite{Bedaride-Fernique-2013}).
Hence, according to Theorem~\ref{th:main}, Ammann-Beenker tilings do not have weak local rules.
This particular case was already (differently) proven by Burkov in \cite{Burkov-1988} (see also \cite{Bedaride-Fernique-2015b}).\\

\begin{figure}[hbtp]
  \includegraphics[width=\textwidth]{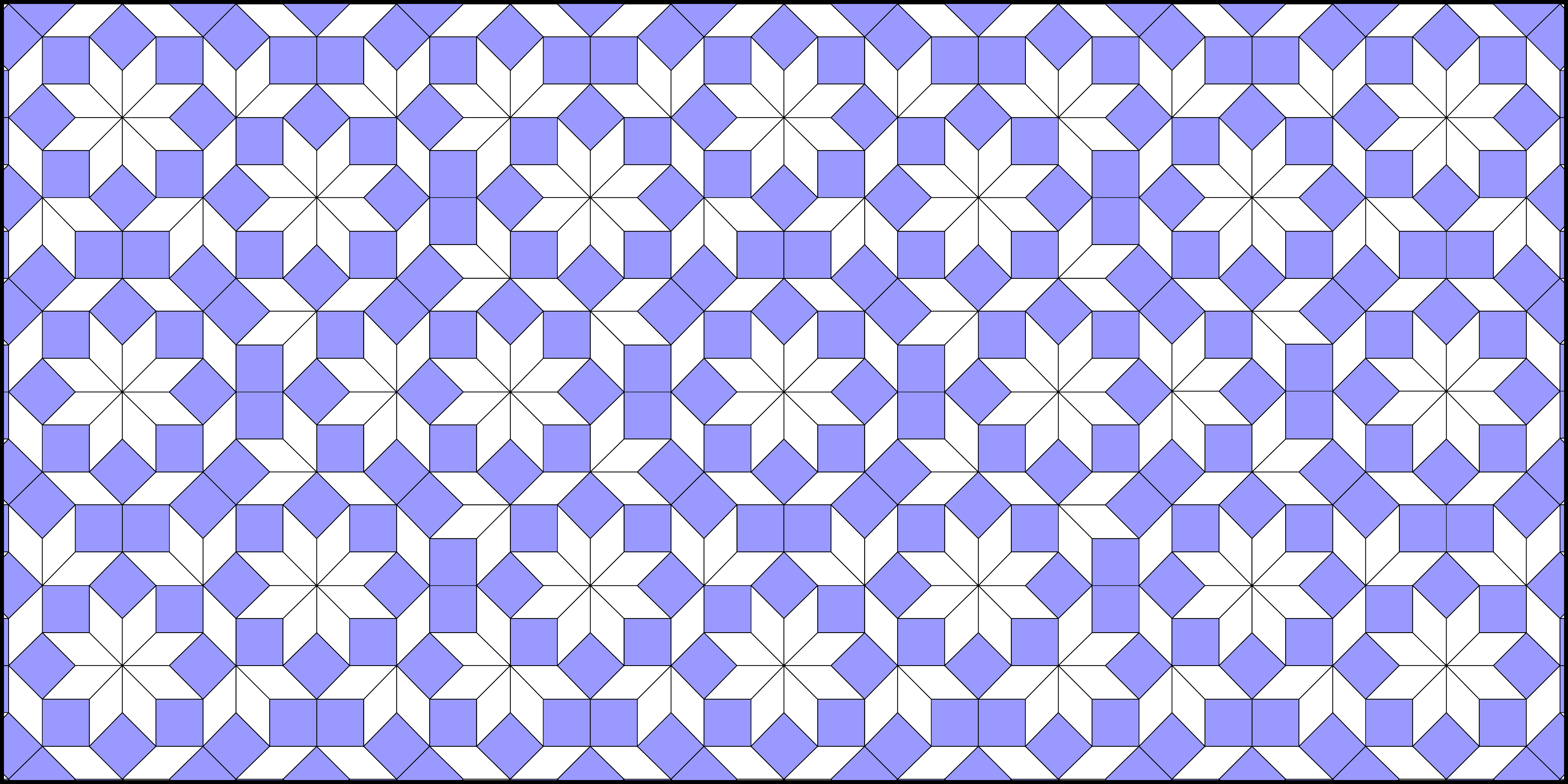}
  \caption{A celebrated octagonal tiling: the Ammann-Beenker tiling.}
  \label{fig:ammann_beenker_tiling}
\end{figure}

%%%%%%%%%%%%%%%%%%%%

It is also worth noticing (although we shall not use this in this paper) that planar tilings provide a very natural interpretation for Grassmann coordinates.
The frequency of the proto-tile $T_{ij}$ in a planar octagonal tiling of slope $E$, which exists, indeed turns out to be proportional to $|G_{ij}|$.
In other words, one can ``read'' on a planar tiling the Grassmann coordinates of its slope.
For example, any Ammann-Beenker tiling contains $\sqrt{2}$ rhombi for one square (and both cover half of the plane since a square is $\sqrt{2}$ times larger than a rhombus, see Fig.~\ref{fig:ammann_beenker_tiling}).\\

%%%%%%%%%%%%%%%%%%%%

A {\bf pattern} of an octagonal tiling is a finite subset of its tiles.
Among patterns, we distinguish {\bf $r$-maps}.
A $r$-map is a pattern whose tiles are exactly those intersecting a closed ball of diameter $r$ drawn on the tilings.
The set of $r$-maps of a tiling form its {\bf $r$-atlas}.
The main question we are here interested in is: when does the $r$-atlas of a tiling characterize it?
Formally, we follow \cite{Levitov-1988}:

\begin{definition}[weak local rules]\label{def:local_rules}
  A plane $E$ has {\em weak local rules} of diameter $r$ and thickness $t$ if any octagonal tiling whose $r$-maps are also $r$-maps of a planar tiling with slope $E$ and thickness $1$ is itself planar with slope $E$ and thickness $t$.
 % whenever the $r$-maps of an octagonal tiling $\mathcal{T}$ are all in the $r$-atlas of a planar tiling with slope $E$ and thickness $1$, $\mathcal{T}$ is planar with slope $E$ and thickness $t$.
\end{definition}

In other words, a finite number of finite prescribed patterns (the $r$-atlas) suffices to enforce a tiling to have the slope $E$.
By extension, one says that a planar tiling admits weak local rules if so does its slope.
The parameter $t\geq 1$, allows some bounded fluctuations around $E$.
{\bf Strong local rules} corresponds to $t=1$.
This distinction between strong and weak local rules actually play a significant role.
For example, the so-called $7$-fold tilings, based on cubic irrationalities, have weak local rules \cite{Socolar-1990} but no strong local rules \cite{Levitov-1988}.
Theorem~\ref{th:main} {\em a fortiori} holds for strong local rules, but the result proven in \cite{Bedaride-Fernique-2015} allows to state corollary~\ref{cor:characterization} only in terms of weak local rules.\\

Let us now briefly recall the notion of {\bf window} and some of its properties (a complete presentation can be found in \cite{Baake-Grimm-2013}, Chapter 7).
The window of a planar octagonal tiling with slope $E$ and thickness $1$ is the octagon obtained by orthogonally projecting $[0,1]^4$ onto the orthogonal plane $E^\bot$.
One can then associate with any pattern $\mathcal{P}$ of this tiling a polygonal region $R(\mathcal{P})$ of its window, such that $\mathcal{P}$ appears in position $\vec{x}$ if and only if the projection of $\vec{x}$ in the window falls in $R(\mathcal{P})$.
This is for example used in \cite{Julien-2010} to compute the {\em complexity} of tilings, that is, the number of its patterns with a given size.
The following proposition, which is a particular case of Prop.~3.5 in \cite{Le-1997} or Prop.~1 in \cite{Levitov-1988}, can then be deduced from the density of the projection of $\mathbb{Z}^4$ in the window:

\begin{proposition}\label{prop:LI_classes}
  Two planar irrational octagonal tilings with parallel slope and thickness $1$ have the same patterns.
\end{proposition}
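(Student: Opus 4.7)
The plan is to use the cut-and-project window formalism. Let $\pi:\mathbb{R}^4\to E^\bot$ denote the orthogonal projection and let $W=\pi([0,1]^4)$ be the window associated with the slope $E$. Since $E$ is nondegenerate, $W$ is a convex octagon with nonempty interior in the $2$-dimensional space $E^\bot$. Two planar tilings of thickness $1$ with slopes parallel to $E$ can be written as $E+\vec{v}_1$ and $E+\vec{v}_2$ for some $\vec{v}_1,\vec{v}_2\in\mathbb{R}^4$; after setting $w_i:=\pi(\vec{v}_i)$, the vertex set of tiling $T_i$ is $\{x\in\mathbb{Z}^4 : \pi(x)-w_i\in W\}$.

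Next I describe the positions at which a given pattern may appear. For a pattern $\mathcal{P}$ whose vertices, relative to a fixed reference vertex, are $d_0=0,d_1,\dots,d_n\in\mathbb{Z}^4$, the pattern appears with reference vertex $x_0$ in the tiling of shift $w$ if and only if $\pi(x_0)-w$ lies in the acceptance region
$$
R(\mathcal{P}) := \bigcap_{i=0}^{n}\bigl(W-\pi(d_i)\bigr).
$$
Working with the open window $\mathring{W}$, the region $R(\mathcal{P})$ becomes an intersection of finitely many open convex sets, hence open whenever nonempty. In particular, if $\mathcal{P}$ appears in $T_1$, then $R(\mathcal{P})$ has nonempty interior.

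The irrationality of $E$ implies $E\cap\mathbb{Z}^4=\{0\}$, so $\pi$ restricted to $\mathbb{Z}^4$ is injective and its image is a rank-$4$ subgroup of the $2$-dimensional space $E^\bot$; a standard application of the structure theorem for closed subgroups of $\mathbb{R}^2$ then forces $\pi(\mathbb{Z}^4)$ to be dense in $E^\bot$, since it can neither sit inside a proper linear subspace nor be discrete. Therefore $\pi(\mathbb{Z}^4)-w_2$ is dense too, and one can find $x_1\in\mathbb{Z}^4$ with $\pi(x_1)-w_2$ in the nonempty interior of $R(\mathcal{P})$. This means $\mathcal{P}$ appears at $x_1$ in $T_2$. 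Swapping $T_1$ and $T_2$ yields the reverse inclusion, so both tilings share the same patterns.

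The one delicate point is the treatment of singular shifts, for which some projected vertex of $\mathcal{P}$ in $T_1$ may lie on $\partial W$ rather than in $\mathring{W}$. In that case I would argue by approximation: the singular tiling is a local limit of tilings obtained from generic shifts, so any finite pattern of the singular tiling already appears in a nearby generic tiling, to which the density argument applies verbatim. Apart from this bookkeeping, the proof reduces to the two core ingredients, namely the nonempty interior of acceptance regions and the density of $\pi(\mathbb{Z}^4)$ in $E^\bot$.
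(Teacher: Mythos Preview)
Your approach is exactly the one the paper indicates: the paper does not give its own proof but simply states that the proposition ``can then be deduced from the density of the projection of $\mathbb{Z}^4$ in the window'' and refers to Prop.~3.5 in Le and Prop.~1 in Levitov. Your write-up fleshes out precisely that sketch via acceptance regions and density.

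One genuine caveat, though: your justification of density has a hole. Saying that $\pi(\mathbb{Z}^4)$ ``can neither sit inside a proper linear subspace nor be discrete'' does not force its closure to be all of $E^\bot$; the closure could still be isomorphic to $\mathbb{R}\times\mathbb{Z}$. This happens exactly when $E^\bot$ contains a nonzero integer vector $c$, i.e.\ when $E$ lies in the rational hyperplane $c^\bot$: then $\langle\pi(x),c\rangle=\langle x,c\rangle\in\mathbb{Z}$ for every $x\in\mathbb{Z}^4$, so $\pi(\mathbb{Z}^4)$ sits in a discrete family of parallel lines. The paper's notion of ``irrational'' (no rational line contained in $E$) does not by itself rule this out, and one can write down nondegenerate irrational planes contained in a rational hyperplane. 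So strictly speaking either an extra hypothesis (that $E^\bot$ also contains no rational line) or an additional argument specific to the window structure is required; the paper's one-line sketch glosses over the same point and defers to the cited references.
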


For example, the Ammann-Beenker tilings all have the same patterns.
This may explain why one often speaks about the Ammann-Beenker tiling in the singular form (as in the caption of Fig.~\ref{fig:ammann_beenker_tiling}) although there is uncountably many of them.
We will here use the window to look how patterns appear when the slope $E$ is modified.
We rely on the following notion, introduced in \cite{Bedaride-Fernique-2015b}:

\begin{definition}[coincidence]\label{def:coincidence}
  A {\em coincidence} of a plane $E\subset\mathbb{R}^4$ is a point of the window of $E$ which lies on the orthogonal projection of (at least) three unit open line segments with endpoints in $\mathbb{Z}^4$.
\end{definition}

Coincidences are exactly the points where new patterns can appear when the slope is modified.
Indeed, the boundary of the region $R(\mathcal{P})$ associated with a pattern $\mathcal{P}$ turns out to be delimited by the projection of line segments of $\mathbb{Z}^4$.
Hence, in order to create a new pattern, the slope must be modified so that the projection of $k\geq 3$ line segments of $\mathbb{Z}^4$ that formed a coincidence now form a nonempty polygonal region (see Fig.~\ref{fig:coincidence}).
One can moreover show that a plane which is not determined (among the planes) by a finite set of coincidences can, for any $r$, be modified without creating a region associated with a pattern of size $r$.
In other words (see \cite{Bedaride-Fernique-2015b}, Prop.~3):

\begin{proposition}\label{prop:coincidence}
  If a plane has weak local rules, then it is determined by finitely many coincidences.% (that is, this is the only plane to have all these coincidences).
\end{proposition}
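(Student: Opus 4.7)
I would argue by contraposition: assuming $E$ is not determined by any finite set of coincidences, I would show that $E$ lacks weak local rules of every diameter $r\geq 0$. Fix such an $r$. From the discussion preceding the statement, the $r$-atlas of a planar tiling with slope $E$ and thickness $1$ corresponds to a finite partition of the window of $E$ into the polygonal regions $R(\mathcal{P})$, one per $r$-pattern $\mathcal{P}$, and the boundary of each region is made of arcs of the orthogonal projections onto $E^\perp$ of finitely many unit open segments with endpoints in $\mathbb{Z}^4$. I would collect these $\mathbb{Z}^4$-segments into a finite set $\Sigma_r$ and record the finite set $C_r$ of coincidences of $E$ produced by triples (or larger subsets) of $\Sigma_r$ whose $E^\perp$-projections share a common point.

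Since $C_r$ does not determine $E$, there is a plane $E'$ not parallel to $E$ such that every coincidence of $C_r$ persists under $E'$: the same triples of $\mathbb{Z}^4$-segments still meet at a single point when projected onto $E'^\perp$. I would further arrange for $E'$ to be irrational, which is legitimate because the condition of preserving finitely many coincidences cuts out an algebraic subvariety strictly containing the $E$-parallel family, hence meeting irrational planes arbitrarily close to $E$. Let $T'$ be any planar tiling with slope $E'$ and thickness $1$. The key step is to show that the $\Sigma_r$-partition of the window of $E'$ is combinatorially equivalent to that of $E$, so that each nonempty region in the $E'$-window corresponds canonically to a region $R(\mathcal{P})$ in the $E$-window of the same $r$-pattern $\mathcal{P}$. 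Granting this, every $r$-pattern of $T'$ is already in the $r$-atlas of slope-$E$ tilings, yet $T'$ has slope not parallel to $E$, contradicting weak local rules of diameter $r$.

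The main obstacle is precisely this combinatorial equivalence. A new face could a priori appear in the $E'$-partition either by a coincidence of $E$ breaking apart into separate pairwise crossings, or by two segments of $\Sigma_r$ that did not meet inside the window of $E$ starting to meet inside the window of $E'$. Preserving $C_r$ directly rules out the first possibility; the second must be controlled by choosing $E'$ close enough to $E$ (which the density argument above permits), so that continuous deformation of segment projections cannot alter the combinatorics of their arrangement inside the window. The proof then reduces to a local continuity argument at each vertex of the $\Sigma_r$-partition, combined with the a priori rigidity provided by the preserved coincidences — the point being that once these finitely many non-generic algebraic conditions are fixed, the combinatorics of the arrangement in a small neighbourhood of $E$ is completely determined.
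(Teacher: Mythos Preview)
The paper does not actually prove this proposition in the text: it gives a one-sentence sketch (``a plane which is not determined \ldots\ can, for any $r$, be modified without creating a region associated with a pattern of size $r$'') and defers the full argument to Prop.~3 of \cite{Bedaride-Fernique-2015b}. Your contrapositive strategy matches that sketch exactly --- perturb $E$ to a nearby non-parallel plane $E'$ preserving the finitely many coincidences relevant at scale $r$, so that no new region (hence no new $r$-pattern) appears in the window --- and you have correctly isolated the combinatorial equivalence of the two window arrangements as the heart of the matter.

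Two points would need tightening in a full write-up. First, to get $E'$ \emph{arbitrarily close} to $E$ (which your continuity step requires), you must use that ``not determined by finitely many coincidences'' means precisely that $E$ is not an \emph{isolated} solution of the system $C_r$; knowing only that $V(C_r)$ contains some other plane, possibly far away, would not suffice. This is the correct reading (the paper later uses ``zero-dimensional system'' synonymously), but it should be stated. Second, combinatorial equivalence of the $\Sigma_r$-arrangement in the two windows does not by itself guarantee that corresponding cells carry the \emph{same} pattern label~$\mathcal{P}$; you also need that for $E'$ close enough to $E$, an interior point of $R(\mathcal{P})$ still sees the same finite configuration of lattice points in the tube, so the label is preserved. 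Both gaps are routine to close, but they are exactly where the work lies, and your sketch treats them a bit lightly.
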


\begin{figure}[hbtp]
  \includegraphics[width=\textwidth]{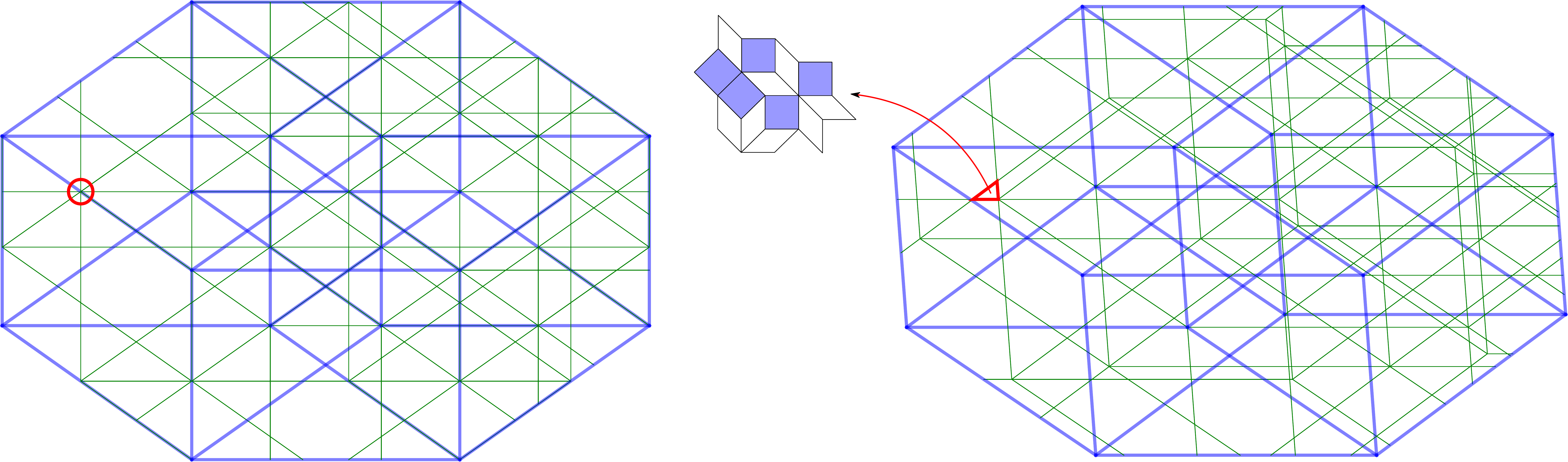}
  \caption{
    The window of an Ammann-Beenker tiling, divided into some regions (each of which corresponds to a pattern), with a circled coincidence (left).
    The slope is slightly changed so that the circled coincidence breaks and a new region appears (right).
    This corresponds to a new pattern which does not appear in an Ammann-Beenker tiling (compare with Fig.~\ref{fig:ammann_beenker_tiling}).
  }
  \label{fig:coincidence}
\end{figure}

\noindent Coincidences can also be expressed in terms of Grassmann coordinates:

\begin{proposition}\label{prop:coincidence2}
  A coincidence of a plane corresponds to an equation on its Grassmann coordinates of the form (up to a permutation of the indices)
  $$
  aG_{14}G_{23}-bG_{13}G_{24}+cG_{12}G_{34}+dG_{24}G_{34}+eG_{14}G_{34}+fG_{14}G_{24}=0,
  $$
  where $a$, $b$, $c$, $d$, $e$ and $f$ are integers.
\end{proposition}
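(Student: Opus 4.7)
The plan is to translate the geometric concurrency condition into a polynomial identity in the Grassmann coordinates of $E$. First, pick a basis of $E^\bot$ so that the orthogonal projections $\vec\beta_i := \pi(\vec e_i) \in \mathbb{R}^2$ are well-defined vectors. The classical duality between the Plücker coordinates of $E$ and of $E^\bot$ yields
$$[\vec\beta_i, \vec\beta_j] = \epsilon_{ij}\, G_{kl},$$
where $[\cdot,\cdot]$ denotes the $2\times 2$ determinant, $\{k,l\} = \{1,2,3,4\}\setminus\{i,j\}$, and $\epsilon_{ij} \in \{\pm 1\}$ is the sign of the permutation $(i,j,k,l)$ (the overall scalar arising from the chosen basis of $E^\bot$ is absorbed into the $G_{kl}$, which are defined up to a common factor). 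A coincidence is then given by three open unit integer segments with endpoints $\vec p_k,\vec p_k+\vec e_{i_k}$, where $\vec p_k\in\mathbb{Z}^4$ and $i_k\in\{1,2,3,4\}$, whose projections into the window meet at a common point.

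Second, I would apply the classical criterion for concurrency of three affine lines $\vec x = \vec a_k + t\vec b_k$ in $\mathbb{R}^2$, namely
$$[\vec a_1,\vec b_1][\vec b_2,\vec b_3] - [\vec a_2,\vec b_2][\vec b_1,\vec b_3] + [\vec a_3,\vec b_3][\vec b_1,\vec b_2] = 0,$$
with $\vec a_k = \pi(\vec p_k) = \sum_j (p_k)_j\vec\beta_j$ and $\vec b_k = \vec\beta_{i_k}$. Up to a permutation of the four indices of $\mathbb{R}^4$ (which permutes the $G_{ij}$, possibly with signs), one may assume $\{i_1,i_2,i_3\} = \{1,2,3\}$, so that $4$ is the ``missing'' direction and will play the role of the distinguished index in the stated form.

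Third, I would expand the concurrency relation using step one. Each factor $[\vec b_m,\vec b_n] = [\vec\beta_{i_m},\vec\beta_{i_n}]$ is a single Grassmann coordinate containing the index $4$, namely $\pm G_{4\,i_k}$ where $\{i_m,i_n\} = \{1,2,3\}\setminus\{i_k\}$. Each factor $[\vec a_k,\vec b_k] = \sum_{j\ne i_k}(p_k)_j[\vec\beta_j,\vec\beta_{i_k}]$ is an integer linear combination of Grassmann coordinates $G_{pq}$ with $\{p,q\} = \{1,2,3,4\}\setminus\{j,i_k\}$: the term $j=4$ produces the unique Grassmann coordinate avoiding both $4$ and $i_k$, whereas the terms $j \in \{1,2,3\}\setminus\{i_k\}$ produce Grassmann coordinates that contain $4$. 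Multiplying out, the ``$4$-avoiding'' term contributes one of the three Plücker products $G_{14}G_{23}$, $G_{13}G_{24}$, $G_{12}G_{34}$ (according as $i_k = 1,2,3$), while each ``$4$-containing'' term contributes a product of two Grassmann coordinates both containing $4$, hence lies in $\{G_{14}G_{24},\,G_{14}G_{34},\,G_{24}G_{34}\}$. Summing over $k=1,2,3$ we recover exactly the six monomials of the proposition, with coefficients that are integer combinations of the entries of $\vec p_1,\vec p_2,\vec p_3$ and the signs $\epsilon_{ij}$.

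The main obstacle I anticipate is the careful bookkeeping of the signs $\epsilon_{ij}$ and of the signs inherent in the concurrency formula, so as to match exactly the displayed pattern $+a,-b,+c,+d,+e,+f$; this is mechanical but easy to mishandle. A secondary technical point is the degenerate case where two of the three segments share a direction, forcing them to be collinear: the resulting relation then reduces to a linear equation in the Grassmann coordinates, which is still of the stated form after an appropriate specialization of the coefficients (e.g.\ $a=b=c=0$).
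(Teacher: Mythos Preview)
Your argument is correct and takes a genuinely different route from the paper's. The paper stays in $\mathbb{R}^4$: it names the three points on the segments that project to the coincidence, writes the two difference vectors as linear combinations of a basis of $E$, solves for the coefficients from two coordinates, and then equates the two resulting expressions for the remaining unknown entry; after clearing denominators this produces the stated equation (with $a=0$, the Pl\"ucker relation then restoring the symmetric form). You instead work entirely in the two-dimensional window $E^\bot$: the Hodge duality $[\vec\beta_i,\vec\beta_j]=\epsilon_{ij}G_{kl}$ converts the determinantal concurrency criterion for three affine lines directly into a quadratic polynomial in the $G_{ij}$, and the bookkeeping of which indices appear in each bracket shows immediately that exactly the six displayed monomials occur, with integer coefficients coming from the lattice points $\vec p_k$.

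Your approach is more symmetric and conceptual: the three ``Pl\"ucker-type'' monomials $G_{14}G_{23}$, $G_{13}G_{24}$, $G_{12}G_{34}$ appear on an equal footing from the outset, whereas the paper's computation singles one out and only recovers symmetry \emph{a posteriori} via the Pl\"ucker relation. The paper's approach, on the other hand, is self-contained and avoids invoking the duality of Grassmann coordinates. Your anticipated obstacle about signs is real but, as you say, mechanical; and your remark on the degenerate case (two segments sharing a direction) is appropriate---note that the paper's own parametrisation $(x,a,b,c),(d,y,e,f),(g,h,z,i)$ tacitly makes the same assumption of three distinct directions, so neither proof treats that case explicitly.
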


\begin{proof}
  Consider a coincidence.
  It is the intersection of the projection of three unit open line segments with endpoints in $\mathbb{Z}^4$.
  Consider, on each of these segments, the point which projects onto the coincidence.  
  This yields three points which can be written (up to a permutation of the indices)
  $$
  (x,a,b,c), \qquad (d,y,e,f),\qquad (g,h,z,i),
  $$
  where all the entries are integers except $x$, $y$ and $z$.
  Let $(u_1,u_2,u_3,u_4)$ and $(v_1,v_2,v_3,v_4)$ be a basis of the plane.
  There are $\lambda_1$, $\mu_1$, $\lambda_2$ and $\mu_2$ such that
  $$
  \left(\begin{array}{c}
    x-d\\
    a-y\\
    b-e\\
    c-f
  \end{array}\right)
  =
  \lambda_1
  \left(\begin{array}{c}
    u_1\\
    u_2\\
    u_3\\
    u_4
  \end{array}\right)
  +\mu_1
  \left(\begin{array}{c}
    v_1\\
    v_2\\
    v_3\\
    v_4
  \end{array}\right)
  $$
  and
  $$
  \left(\begin{array}{c}
x-g\\
a-h\\
b-z\\
c-i
\end{array}\right)
=
\lambda_2
\left(\begin{array}{c}
u_1\\
u_2\\
u_3\\
u_4
\end{array}\right)
+\mu_2
\left(\begin{array}{c}
v_1\\
v_2\\
v_3\\
v_4
\end{array}\right).
$$
The third and fourth entries of the first equation yield
$$
\lambda_1=\frac{(b-e)v_4-(c-f)v_3}{G_{34}} \qquad \textrm{and} \qquad \mu_1=\frac{-(b-e)u_4+(c-f)u_3}{G_{34}}.
$$
The second and fourth entries of the second equation yield
$$
\lambda_2=\frac{(a-h)v_4-(c-i)v_2}{G_{24}} \qquad \textrm{and} \qquad \mu_2=\frac{-(a-h)u_4+(c-i)u_2}{G_{24}}.
$$
The first entry of these equations yields two expressions for $x$:
$$
x=d+\lambda_1u_1+\mu_1v_1=g+\lambda_2u_1+\mu_2v_1.
$$
By replacing $\lambda_1$, $\mu_1$, $\lambda_2$ and $\mu_2$ by their expressions and by grouping the terms in order to make appearing Grassmann entries, the previous inequality becomes
$$
d+\frac{(b-e)G_{14}-(c-f)G_{13}}{G_{34}}
=
g+\frac{(a-h)G_{14}-(c-i)G_{12}}{G_{24}}.
$$
By multiplying by $G_{24}G_{34}$ we get the claimed equation (with $a=0$).
\end{proof}

In the proof of the above proposition, we got $a=0$.
But the Plücker relation allows to replace any element in $\{G_{12}G_{34},G_{13}G_{24},G_{14}G_{23}\}$ by an integer linear combination of the two other ones.
We could thus equally have $b=0$ or $c=0$.
We have nevertheless chosen to write the three terms in order to emphasize the symmetry of this equation.
Indeed, one checks that any permutation of the indices $\{1,2,3\}$ yields the same permutation of the coefficients $\{a,b,c\}$ (with a change of sign) and $\{d,e,f\}$, hence do not modify the form of the equation.
The index $4$ plays here a special role because this coincidence corresponds to the projection of unit segments directed by $\vec{e}_1$, $\vec{e}_2$ and $\vec{e}_3$.

%The notion of coincidence extends to general planar tilings (see \cite{Bedaride-Fernique-2015b}), yielding polynomial equations on Grassmann coordinates.
%In particular, this allows to more easily get the algebraic obstruction proved by Thang Le in \cite{Le-1997} (with a five page proof involving Tarski's theory on real algebra).

%%%%%%%%%%%%%%%%%%%%%%%
\section{At most two types of subperiods}
\label{sec:less}

In this section, we show that a plane $E$ with at most two types of subperiods cannot admit weak local rules.\\

Let us first informally sketch the proof.
The idea is to look at how spread the integer points which enter or exit the tube $E+[0,1]^4$ when $E$ is shifted.
On a planar tiling of slope $E$ and thickness $1$, this corresponds to a local rearrangement of tiles called {\em flip} (physicists speak about {\em phason flips}).
First, we shall show that, for any $r$, these flips can be made sparse enough to draw on $E$ a curve which stays at distance at least $r$ from any of them.
Then, we shall shift $E$ only on one side of this curve in order to create in the tiling a ``step'' that cannot be detected by patterns of diameter $r$.
Last, by repeating this, we shall build a sort of ``staircase'' which has the same $r$-atlas as the original tiling but is not planar (hence contradicting the existence of weak local rules).\\

\noindent Formally, let us associate with any shift vector $\vec{s}\in\mathbb{R}^4$ the set
$$
E(\vec{s}):=\{x\in\mathbb{Z}^4,~x\in (E+\vec{s}+[0,1]^4)\backslash(E+[0,1]^4)\}.
$$
The following proposition is illustrated by Figures~\ref{fig:four_subperiods} and \ref{fig:two_subperiods}.

\begin{proposition}\label{prop:shift_flips}
  Let $E$ be a plane of $\mathbb{R}^4$ and $r\geq 0$.
  Then, for $\vec{s}$ small enough, one can writes $E(\vec{s})=E_1\cup E_2\cup E_3\cup E_4$, where $E_i$ is empty if $\vec{s}\in\mathbb{R}\vec{e}_i$, or can be described according to the number of subperiods of type $i$ of $E$ otherwise:
  \begin{description}
  \item[0 subperiod:] any two points in $E_i$ are at distance at least $r$ from each other;
  \item[1 subperiod:] there is a set of parallel lines of $E$ at distance at least $r$ from each other, whose direction depends only on the subperiod, and such that the points in $E_i$ are within distance $1$ from these lines;
  \item[2 subperiods:] the points of $E_i$ are within distance $1$ from a lattice.
  \end{description}
\end{proposition}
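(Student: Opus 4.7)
The plan is to work on the window side. Let $\pi : \mathbb{R}^4 \to E^\perp$ denote the orthogonal projection and $W = \pi([0,1]^4)$ the octagonal window. Since $\vec{x} \in \mathbb{Z}^4$ lies in $E + [0,1]^4$ if and only if $\pi(\vec{x}) \in W$, the set $E(\vec{s})$ consists of the integer points whose projection lies in the symmetric difference $(W + \pi(\vec{s})) \setminus W$. For $\vec{s}$ small, this symmetric difference is a thin band made of eight parallelogram slivers, one along each edge of $W$, and these edges come in four parallel pairs, each pair in direction $\pi(\vec{e}_i)$ for some $i \in \{1, \ldots, 4\}$. I therefore define $E_i$ as the set of integer points projecting into the slivers along the two edges parallel to $\pi(\vec{e}_i)$. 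The case ``$\vec{s} \in \mathbb{R}\vec{e}_i$'' is then immediate: $\pi(\vec{s})$ is parallel to $\pi(\vec{e}_i)$, so the two edges in question slide along themselves and their slivers are empty, giving $E_i = \emptyset$.

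For the structural analysis I introduce the three-dimensional subspace $F := E + \mathbb{R}\vec{e}_i$, whose image under $\pi$ is the line $\mathbb{R}\pi(\vec{e}_i)$ carrying the edges. A point $\vec{x} \in E_i$ sits within $O(|\vec{s}|)$ of an integer translate of $F$; equivalently, $|\pi_F(\vec{x})| = O(|\vec{s}|)$, where $\pi_F : \mathbb{R}^4 \to F^\perp \cong \mathbb{R}$ is the orthogonal projection. The algebraic key is the identity $\mathrm{rk}(F \cap \mathbb{Z}^4) = 1 + k$, where $k \in \{0, 1, 2\}$ is the number of subperiods of type $i$: $\vec{e}_i$ always lies in $F \cap \mathbb{Z}^4$, and each additional independent integer vector of $F$ has zero $i$-th coordinate and corresponds, via the equivalence recalled below Definition~\ref{def:subperiod}, to a subperiod of type $i$. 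Dually $\pi_F(\mathbb{Z}^4)$ is a subgroup of $\mathbb{R}$ of rank $3 - k$: a genuine lattice when $k = 2$, and a dense subgroup when $k \leq 1$.

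This sets up the trichotomy. When $k = 2$, $\pi_F(\mathbb{Z}^4)$ is a lattice with smallest positive element $\kappa$; taking $|\vec{s}| < \kappa$ forces $\pi_F(\vec{x}) = 0$ for every $\vec{x} \in E_i$, so $\vec{x}$ lies on $\vec{v}_e + (F \cap \mathbb{Z}^4)$ for an appropriate cube vertex $\vec{v}_e \in \{0, 1\}^4$, and hence within bounded distance of the rank-$3$ lattice $F \cap \mathbb{Z}^4$. When $k \leq 1$, the elements of $E_i$ group by ``level'' $\pi_F(\vec{x}) = \ell \in \pi_F(\mathbb{Z}^4) \cap (-|\vec{s}|, |\vec{s}|)$; each such level contributes, after restriction to the sliver and projection to $E$, a single line parallel to the subperiod direction $\vec{v}_{\mathrm{sub}} \in E$ when $k = 1$, and a single point per edge when $k = 0$. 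The main obstacle is then a qualitative Diophantine statement: for any $R > 0$, the finitely many vectors $\vec{v} \in \mathbb{Z}^4 \setminus (F \cap \mathbb{Z}^4)$ of norm at most $R$ have $|\pi_F(\vec{v})|$ bounded away from $0$, since $\pi_F$ is injective on $\mathbb{Z}^4 / (F \cap \mathbb{Z}^4)$. Contrapositively, for $|\vec{s}|$ small enough all distinct levels meeting the sliver come from integer vectors of norm larger than $R$, so the corresponding lines (case $k = 1$) or isolated points (case $k = 0$) of $E_i$ are pairwise at distance at least $r$, provided $R$ is chosen accordingly. This yields the three announced descriptions.
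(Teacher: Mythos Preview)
Your argument is essentially the paper's, recast in more intrinsic language: where the paper removes the $i$-th coordinate via $\pi_i:\mathbb{R}^4\to\mathbb{R}^3$ and then measures the height of $\pi_i(E_i)\subset\mathbb{Z}^3$ above the plane $\pi_i(E)$ using the normal vector $(G_{jk},-G_{j\ell},G_{k\ell})$, you project directly onto $F^\perp$ with $F=E+\mathbb{R}\vec{e}_i$. These are the same linear form up to scale, and your identity $\mathrm{rk}(F\cap\mathbb{Z}^4)=1+k$ is precisely the paper's observation that integer relations on the three Grassmann coordinates without index $i$ are the type-$i$ subperiods. The finiteness/pigeonhole step is identical.

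Two points to tighten. First, the sliver lies along an edge through a specific cube vertex $\vec{v}_e$, so what actually holds is $|\pi_F(\vec{x})-\pi_F(\vec{v}_e)|=O(|\vec{s}|)$, not $|\pi_F(\vec{x})|=O(|\vec{s}|)$; this is harmless for the Diophantine step since only differences $\pi_F(\vec{x}-\vec{y})$ enter, but in the $k=2$ case your conclusion should read $\pi_F(\vec{x})=\pi_F(\vec{v}_e)$ (whence $\vec{x}\in\vec{v}_e+(F\cap\mathbb{Z}^4)$ as you then state). Second, you do not justify the ``within distance $1$'' part of the statement. The paper handles this in its final paragraph by showing that two points of $E_i$ with the same $\pi_i$-image differ by $k\vec{e}_i$ with $k\in\{0,1\}$, using that the closure of $\pi'(E_i)$ shrinks to a unit segment as $\vec{s}\to 0$; your sketch would benefit from making this explicit rather than invoking an unspecified ``bounded distance''.
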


\begin{proof}
  For $i=1,\ldots,4$, define
  $$
  E_i:=\{x\in\mathbb{Z}^4,~x\in (E+\vec{s}+[0,1]^4)\backslash(E+[0,1]^4+\mathbb{R}\vec{e}_i)\}.
  $$
  This set is empty when $\vec{s}\in\mathbb{R}\vec{e}_i$, and one has:
  $$
  \cup_i E_i=\{x\in\mathbb{Z}^4,~x\in (E+\vec{s}+[0,1]^4)\backslash\cap_i(E+[0,1]^4+\mathbb{R}\vec{e}_i)\}=E(\vec{s}).
  $$
  Assume, {\em w.l.o.g.}, that $i=1$ and consider $E_1$.\\
  
  Actually, let us first consider $\pi_1(E_1)$ where $\pi_1$ denote the projection which removes the first entry:
  $$
  \pi_1(E_1)=\{x\in\mathbb{Z}^3,~x\in (\pi_1(E)+\pi_1(\vec{s})+[0,1]^3)\backslash(\pi_1(E)+[0,1]^3)\}.
  $$
  This is the set of the integer points of the Euclidean space lying between two planes parallel to $\pi_1(E)$, with one being the image of the other by a translation by $\pi_1(\vec{s})$.
  If $E$ is generated by $(u_1,u_2,u_3,u_4)$ and $(v_1,v_2,v_3,v_4)$, then $\pi_1(E)$ is generated by $(u_2,u_3,u_4)$ and $(v_2,v_3,v_4)$.
  The cross product of these two vectors yields a normal vector for $\pi_1(E)$.
  One computes $(G_{23},-G_{24},G_{34})$, where the $G_{ij}$'s are the Grassmann coordinates of $E$.
  One can thus rewrite:
  $$
  \pi_1(E_1)=\{(a,b,c)\in\mathbb{Z}^3,~z\leq aG_{23}-bG_{24}+cG_{34}\leq z+f(\vec{s})\},
  $$
  where $z$ depends only on how the unit cube $[0,1]^3$ projects onto the line directed by $(G_{23},-G_{24},G_{34})$, while $f(\vec{s})$ is the dot product of $(G_{23},-G_{24},G_{34})$ and $\pi_1(\vec{s})$.
  In particular, $f(\vec{s})$ tends towards $0$ when $\vec{s}$ tends towards $\vec{0}$.
  Now, assume that for any $\vec{s}$, $\pi_1(E_1)$ contains two integer points $x(\vec{s})$ and $y(\vec{s})$ at distance at most $r$ from each other.
  The non-zero integer vector $d(\vec{s}):=x(\vec{s})-y(\vec{s})$ takes finitely many values.
  So does also its dot product with $(G_{23},-G_{24},G_{34})$, which is in the interval $[-f(\vec{s}),f(\vec{s})]$.
  This latter is thus necessarily equal to zero when this interval is small enough, that is, for a small enough $\vec{s}$.
  For such a $\vec{s}$, we have
  $$
  d_1(\vec{s})G_{23}-d_2(\vec{s})G_{24}+d_3(\vec{s})G_{34}=0,
  $$
  where $d_i(\vec{s})$ denotes the $i$-th entry of $d(\vec{s})$.
  Since $d(\vec{s})$ is a non-zero integer vector, this is exactly the equation of a subperiod of type $1$.
  In other words, for a small enough $\vec{s}$, any two points in $\pi_1(E_1)$ at distance at most $r$ are aligned along a direction determined by a subperiod of type $1$ of $E$.
  There is thus no such point if $E$ does not have a subperiod of type $1$.
  If $E$ has exactly one subperiod, then the points must be on parallel lines, with the distance between two lines being less than $r$ (otherwise it would yields a second subperiod).
  If $E$ has two subperiods, then the points are on a lattice.
  We thus have the wanted description\ldots but for $\pi_1(E_1)$!\\
  
  Let us come back to $E_1$.
  Let $x$ and $y$ in $E_1$.
  The points $\pi_1(x)$ and $\pi_1(y)$ are at distance at most $f(\vec{s})$ from $\pi_1(E_1)$.
  There is thus a vector $\vec{p}\in E$ and $\vec{q}$ of length at most $f(\vec{s})$ such that
  $$
  \pi_1(x-y)=\vec{p}+\vec{q}.
  $$
  By definition of $\pi_1$, there is then $k\in\mathbb{Z}$ such that
  $$
  x-y=\vec{p}+\vec{q}+k\vec{e}_1.
  $$
  Now, let $\pi'$ denote the orthogonal projection onto $E^\bot$.
  One has
  $$
  \underbrace{\pi'(x-y)}_{\in\pi'(E_1(\vec{s}))}=\underbrace{\pi'(\vec{p})}_{=0}+\underbrace{\pi'(\vec{q})}_{||.||\leq f(\vec{s})}+k\pi'(\vec{e}_1).
  $$
  The set $\pi'(E_1(\vec{s})$ is bounded and its closure converges towards a unit segment directed by $\vec{e}_1$ when $\vec{s}$ tends towards $\vec{0}$.
  For $\vec{s}$ small enough, this yields $k=0$ or $k=1$.
  The points in $E_1$ thus spread as the ones in $\pi_1(E_1)$ do, up to a small local correction by $\vec{e}_1$.
  This shows the claimed result.
\end{proof}

\begin{figure}[hbtp]
  \centering
  \includegraphics[width=0.48\textwidth]{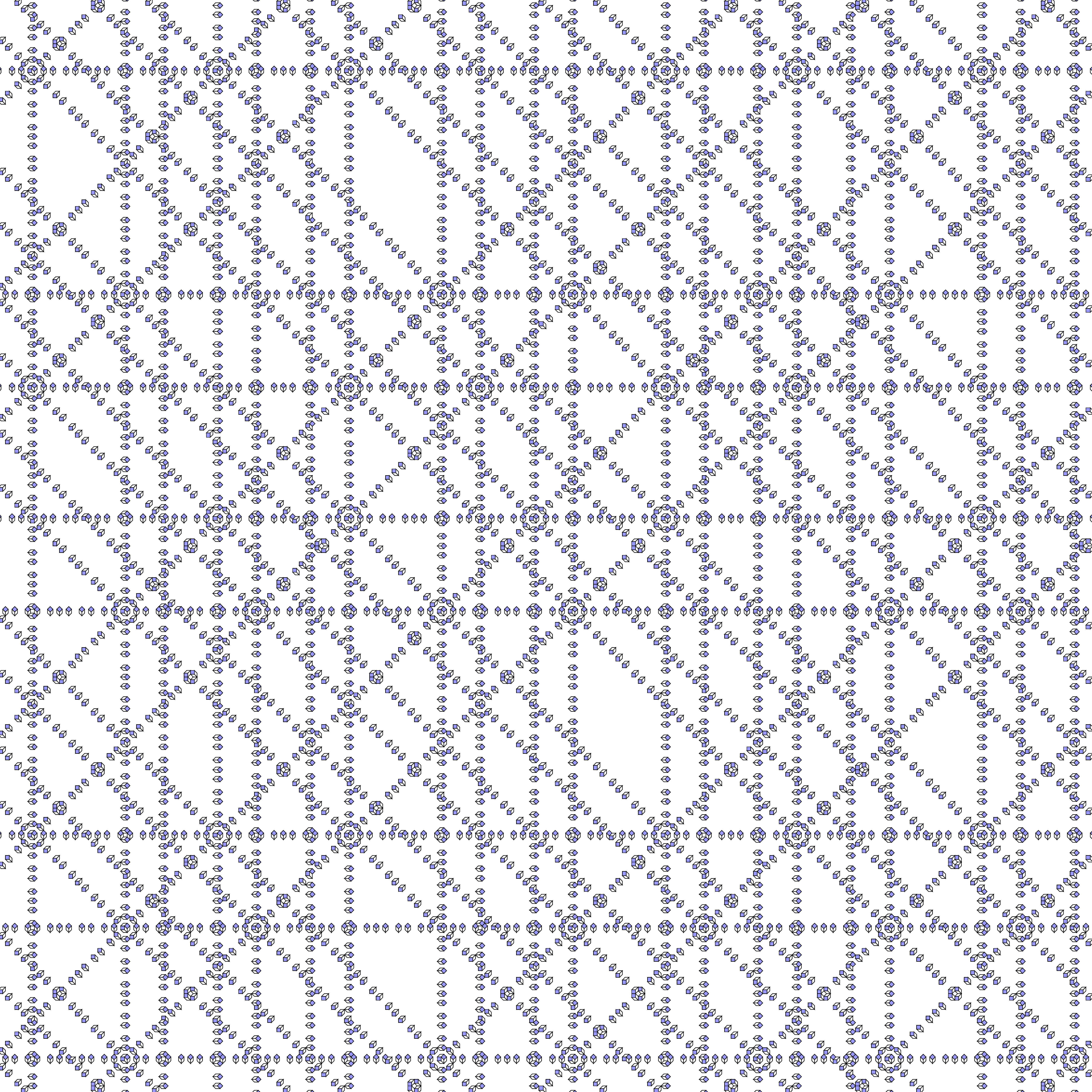}
  \hfill
  \includegraphics[width=0.48\textwidth]{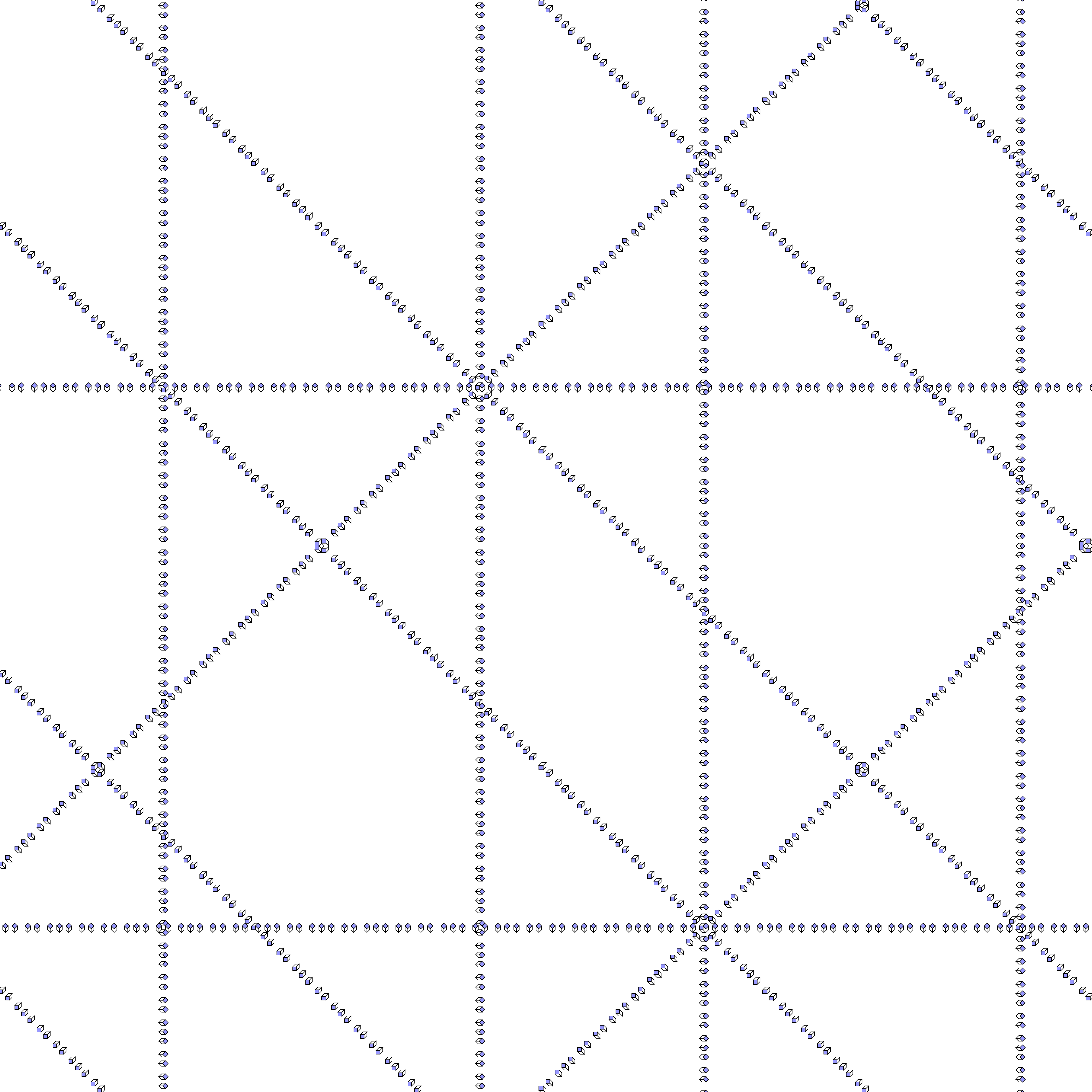}
  \caption{
    When an Ammann-Beenker tiling is shifted, it creates lines of flips whose directions are determined by its four subperiods (left).
    A smaller shift yields a similar picture, but the lines become sparser (right).
  }
  \label{fig:four_subperiods}
\end{figure}

\begin{figure}[hbtp]
  \centering
  \includegraphics[width=0.48\textwidth]{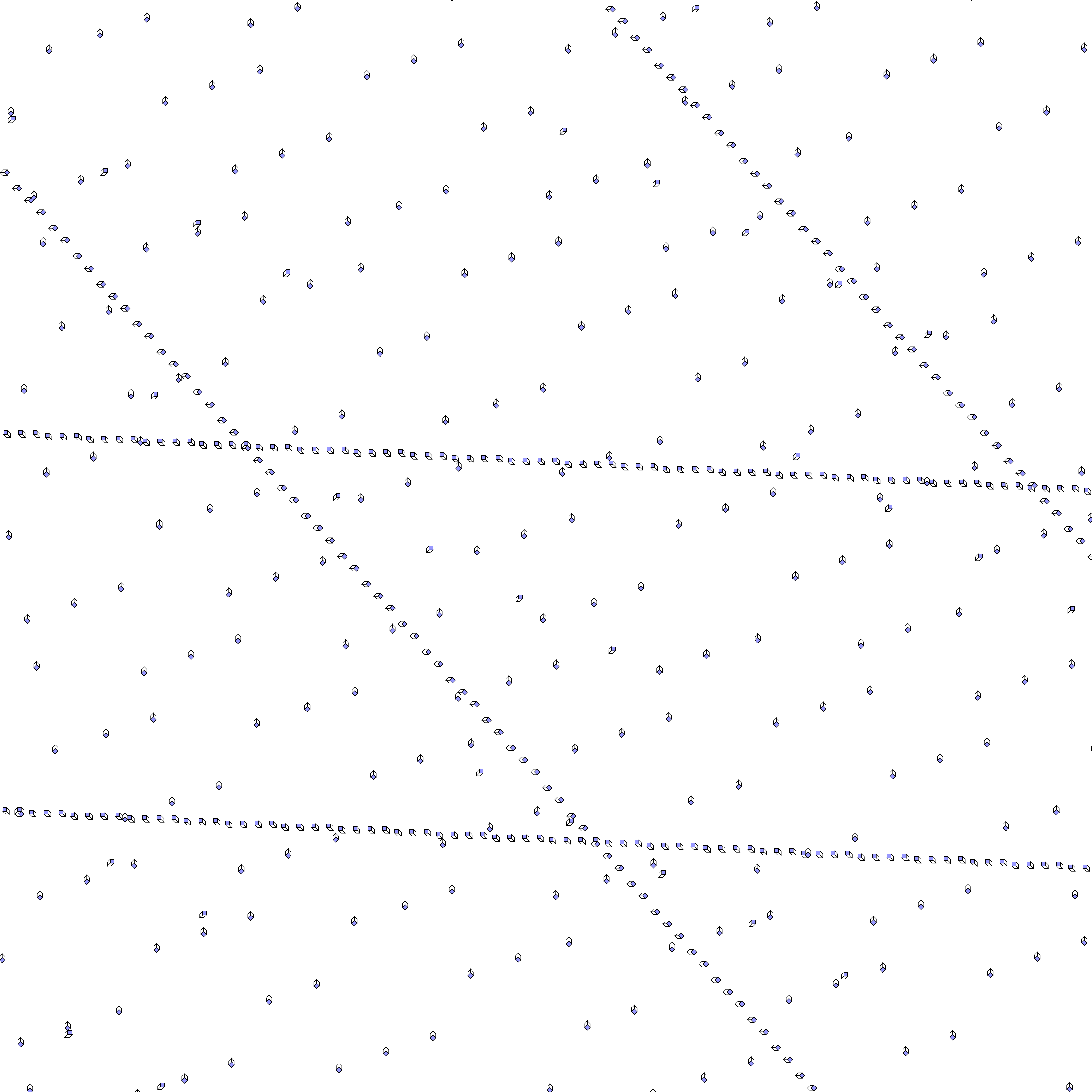}
  \hfill
  \includegraphics[width=0.48\textwidth]{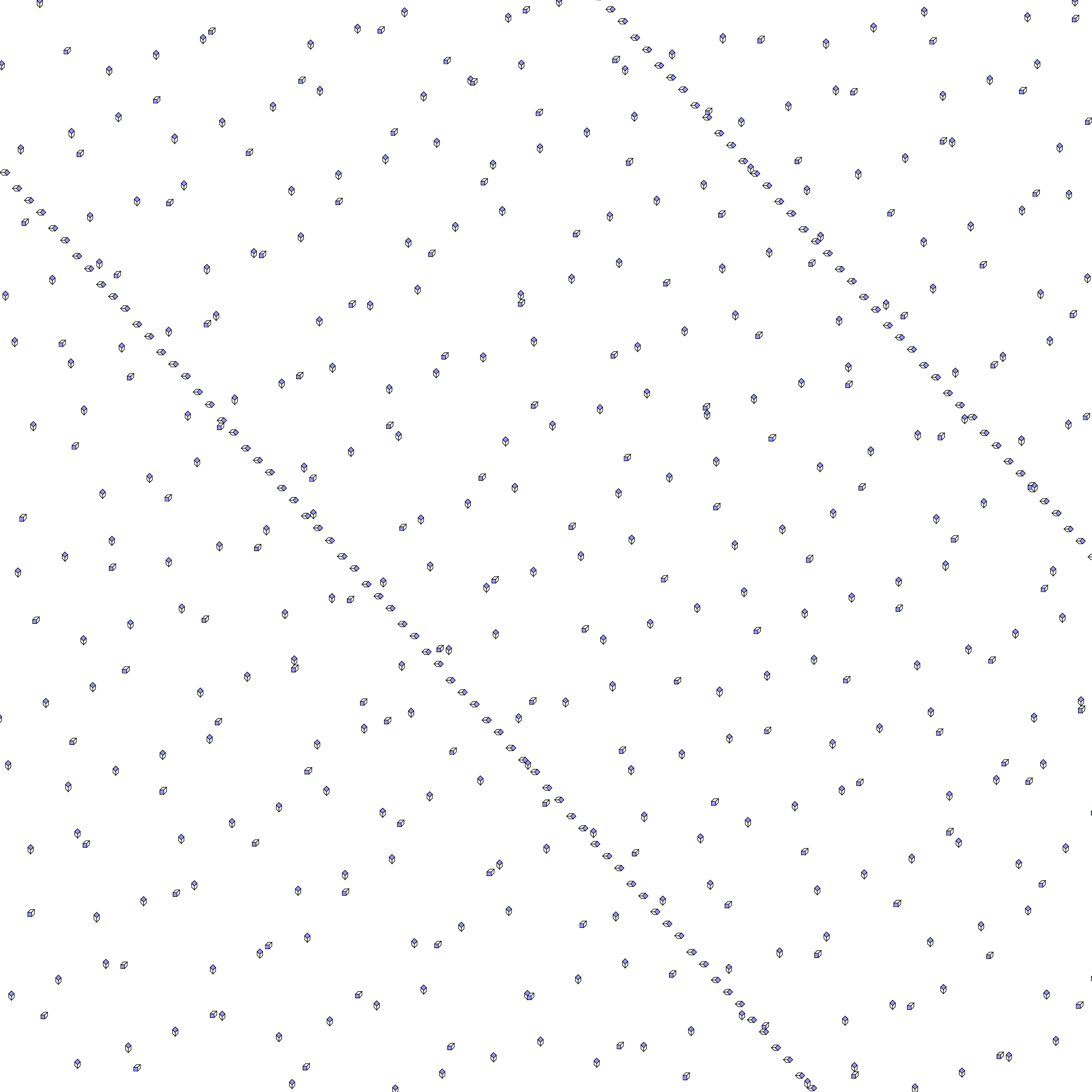}
  \caption{
    The irrational slope $(1,\sqrt{2},\sqrt{3},2\sqrt{2},3\sqrt{3},\sqrt{6})$ has one subperiod of type $3$ and one of type $4$.
    A generic shift thus creates two sets of sparse lines and two sets of sparse flips (left).
    However, a shift along $\vec{e}_4$ ``neutralizes'' the lines of flips directed by the subperiod of type $4$ (right).
    }
  \label{fig:two_subperiods}
\end{figure}

\noindent We shall need the following elementary lemma:

\begin{lemma}\label{lem:four_subperiods_two_types}
A plane with two subperiods of type $i$ and two of type $j$ is rational.
\end{lemma}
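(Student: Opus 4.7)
The strategy is to translate both pairs of subperiods into rational linear relations on the Grassmann coordinates of $E$, and then to extract the last missing coordinate from the Plücker relation.

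The key combinatorial observation is that two distinct types $i\neq j$ of subperiods jointly involve five of the six Grassmann coordinates: the three coordinates with no index $i$ and the three with no index $j$, which overlap in the single coordinate $G_{kl}$ with $\{k,l\}=\{1,2,3,4\}\setminus\{i,j\}$. Only $G_{ij}$ is not directly controlled. Assuming, as is clearly intended, that the two subperiods of type $i$ are $\mathbb{Q}$-linearly independent (otherwise there is really only one), they impose two independent rational linear relations on the three type-$i$ Grassmann coordinates. These three coordinates therefore generate a $1$-dimensional $\mathbb{Q}$-subspace of $\mathbb{R}$, i.e.\ their pairwise ratios are rational. The same argument applied to type $j$ does the job for the three type-$j$ Grassmann coordinates.

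The two families are then glued along their common coordinate $G_{kl}$, which is nonzero by the standing nondegeneracy assumption. Dividing by $G_{kl}$, I get that each of the five coordinates $G_{ab}$ with $\{a,b\}\neq\{i,j\}$ is a rational multiple of $G_{kl}$. To capture $G_{ij}$, I invoke the Plücker relation, which expresses $G_{ij}G_{kl}$ (up to sign) as an integer combination of the two remaining products of Grassmann coordinates among $\{G_{12}G_{34},G_{13}G_{24},G_{14}G_{23}\}$. Each such product equals $G_{kl}^{2}$ times a rational, so $G_{ij}/G_{kl}$ is rational as well.

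At this point all six Grassmann coordinates of $E$ are rational multiples of a single number, so the Plücker point of $E$ lies in $\mathbb{P}^{5}(\mathbb{Q})$. Because the Plücker embedding is Galois-equivariant, this forces $E$ to be a $\mathbb{Q}$-rational $2$-subspace of $\mathbb{R}^{4}$, which in particular contains a line spanned by a rational vector; so $E$ is rational in the sense of Section~\ref{sec:settings}. I do not foresee a real obstacle here: the only small point to monitor is the nonvanishing of $G_{kl}$, which is taken care of by the implicit nondegeneracy convention, and the argument reduces to a couple of lines of routine algebra around the Plücker identity.
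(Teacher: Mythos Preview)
Your argument is correct and follows essentially the same route as the paper: reduce five of the six Grassmann coordinates to rational multiples of the shared coordinate $G_{kl}$ via the two pairs of independent subperiods, then recover $G_{ij}$ from the Pl\"ucker relation. The paper is slightly terser at the end (it simply says the coordinates lie in a common number field, hence the plane is totally rational), whereas you spell out the nondegeneracy check $G_{kl}\neq 0$ and the passage from a rational Pl\"ucker point back to a rational $2$-plane; but the substance is identical.
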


\begin{proof}
  Assume, {\em w.l.o.g.}, that $i=1$ and $j=2$.
  The two subperiods of type $1$ yield two independent rational equations on $G_{23}$, $G_{24}$ and $G_{34}$.
  Hence $G_{23}$, $G_{24}$ are in $\mathbb{Q}(G_{34})$.
  The two subperiods of type $2$ yield two independent rational equations on $G_{13}$, $G_{14}$ and $G_{34}$.
  Hence $G_{13}$, $G_{14}$ are in $\mathbb{Q}(G_{34})$.
  The Plücker relations then yields that $G_{12}$ is also in $\mathbb{Q}(G_{34})$.
  The Grassmann coordinates are thus all in the same number field.
  This shows that such a plane is rational, actually {\em totally} rational ({\em i.e.}, it contains two independent rational lines).
\end{proof}

\begin{proposition}\label{prop:at_most_two_subperiods}
An irrational plane with at most two types of subperiods does not admit weak local rules.
\end{proposition}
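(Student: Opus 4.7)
The plan is to execute the informal sketch given at the start of this section: for every $r\geq 0$, I will construct an octagonal tiling whose $r$-atlas is contained in that of some planar tiling of slope $E$ and thickness $1$, but which is not planar with slope $E$. This contradicts the definition of weak local rules of diameter $r$ and arbitrary thickness.

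First I choose the shift direction. Let $I\subseteq\{1,\ldots,4\}$ be the set of indices $i$ such that $E$ admits at least one subperiod of type $i$; by hypothesis $|I|\leq 2$. By Lemma~\ref{lem:four_subperiods_two_types} and the irrationality of $E$, at most one element of $I$ can carry two independent subperiods. Pick $i\in I$ attaining the maximum number of subperiods, so that the other element $j\in I\setminus\{i\}$, if any, carries exactly one subperiod. Set $\vec{s}=\varepsilon\vec{e}_i$ for a small $\varepsilon>0$. Proposition~\ref{prop:shift_flips}, applied with parameter $10r$ and $\vec{s}$ sufficiently small, then yields $E_i=\emptyset$; $E_j$ (if $j$ exists) lies within distance $1$ of a family of parallel lines at mutual distance $\geq 10r$, of direction $\vec{d}$ depending only on the subperiod of type $j$; and each remaining $E_k$ consists of points at pairwise distance $\geq 10r$.

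Next I build a curve $\gamma\subset E$ staying at distance $\geq r$ from every flip. Start with a straight line parallel to $\vec{d}$ placed in the middle of a corridor between two consecutive lines of $E_j$, so it lies at distance $\geq 5r$ from every point of $E_j$. For each isolated flip of some sparse $E_k$ within distance $r$ of this line, deform locally by a detour of diameter at most $2r$ that goes around the flip at distance $\geq r$; these detours stay inside the corridor and do not interact, since the flips they avoid are pairwise at distance $\geq 10r$. To assemble the staircase, for every $m\in\mathbb{Z}$ let $T_m$ be a planar tiling of slope $E+m\vec{s}$ and thickness $1$ (all sharing the same $r$-atlas, by Proposition~\ref{prop:LI_classes}), and build parallel translates $\gamma_m$ at mutual distance $\geq r$, each obtained from the same local recipe applied to the (analogously sparse) flip configuration between $T_{m-1}$ and $T_m$. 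Glue the tiles of $T_m$ in the strip bounded by $\gamma_m$ and $\gamma_{m+1}$; this is well-defined and edge-to-edge, because $T_{m-1}$ and $T_m$ agree on an $r$-neighbourhood of $\gamma_m$. Any $r$-map of the resulting tiling $T$ either fits inside a single strip---hence is an $r$-map of some $T_m$ and so an $r$-map of $T_0$---or straddles some $\gamma_m$, in which case it lies entirely in the common region of $T_{m-1}$ and $T_m$ and is again an $r$-map of $T_0$. Thus the $r$-atlas of $T$ is contained in that of $T_0$. On the other hand the irrationality of $E$ implies $\vec{e}_i\notin E$, so $\vec{s}$ has nonzero component along $E^\bot$, the lifts of the successive $T_m$ visit parallel tubes around $E$ that recede unboundedly along $E^\bot$, and the lift of $T$ is contained in no tube $E+[0,t]^4$; hence $T$ is not planar with slope $E$.

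The main obstacle is making sure that Step~1 really leaves at most one family of parallel lines of flips to navigate around: this is exactly where the hypothesis ``at most two types of subperiods'' enters, through Lemma~\ref{lem:four_subperiods_two_types} and the choice of shifting along $\vec{e}_i$ to kill the most obstructive type. Once this is secured, the construction of the curves $\gamma_m$ and the consistency of the staircase gluing are elementary planar arguments, requiring only that $\varepsilon$ be chosen small enough to make every spacing generous with respect to $r$.
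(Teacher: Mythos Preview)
Your proposal follows the same strategy as the paper: neutralise the worst subperiod type by shifting along the corresponding $\vec{e}_i$, invoke Proposition~\ref{prop:shift_flips} to get at most one family of sparse parallel lines plus some sparse isolated flips, thread a curve through a corridor between two such lines, and iterate to build a non-planar staircase with the correct $r$-atlas.

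Two small imprecisions are worth fixing. First, your choice ``pick $i\in I$'' presupposes $I\neq\emptyset$; when $E$ has no subperiods at all you should simply pick any $i$, and then every $E_k$ consists of isolated points. Second, and more substantively, your justification that ``the flips they avoid are pairwise at distance $\geq 10r$'' only holds \emph{within} each sparse set $E_k$; two flips coming from distinct sparse sets may well be close to each other, so your individual detours can interact. The paper handles this with a short pigeonhole argument: to block a corridor of width $\geq 2r$ by balls of diameter $r$ one needs at least three mutually close balls, and since there are only two sparse sets, two of those three centres must lie in the same $E_k$ and hence be far apart---a contradiction. With that correction (and the observation that the line direction $\vec d$ and the sparseness bounds depend only on the direction of $E$ and on $\vec s$, so the same recipe works uniformly at every step) your argument goes through.
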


\begin{proof}
  Let $E$ be an irrational plane with at most two types of subperiods.
  Let $\mathcal{T}$ be a planar tiling with slope $E$ and thickness $1$. 
  By Lemma~\ref{lem:four_subperiods_two_types}, $E$ has at most three subperiods, say two of type $4$ and one of type $3$.
  Fix $r\geq 0$.
  We shall construct step by step a ``staircase'' tiling $\mathcal{T}_\infty$ with the same $r$-atlas as $\mathcal{T}$ but which is non-planar.
  This will prove that $E$ has no weak local rules of diameter $r$.\\
  
  {\bf Step height.}
  Prop.~\ref{prop:shift_flips} yields a nonzero vector $\vec{s}\in\mathbb{R}\vec{e}_4$ such that $E(\vec{s})$ can be written $E_1\cup E_2\cup E_3$, where points in $E_1$ are at distance $1$ from parallel lines of $E$ at distance $3r$ from each other, while $E_2$ and $E_3$ are points at distance $3r$ from each other.
  This shift $\vec{s}$ will be the step height of our staircase.\\

  {\bf Step edge.}
  Consider two consecutive parallel lines of $E$ from which the points in $E_1$ are at distance at most $1$.
  Between them, there is a stripe of $E$ of width at least $2r$ which stays at distance at least $r/2$ from $E_1$.
  We claim that one can divide $E$ in two parts by drawing in this stripe a curve which stays at distance at least $r/2$ from any point in $E(\vec{s})$.
  In other words, this stripe cannot be blocked by the balls of diameter $r$ centered on the points in $E_2$ and $E_3$.
  Indeed, since this stripe has width $2r$, one needs at least three intersecting such balls to block it.
  Two of these balls must be centered on two points in the same set $E_2$ or $E_3$.
  These two points should be at distance at most $2r$ (because of the diameter of the balls), but it is impossible because two points in one of these sets are at distance at least $3r$ from each other.
  One can thus draw the wanted curve.
  It defines the edge of our step.
  Fig.~\ref{fig:step_edge} illustrates this.\\

  \begin{figure}[hbtp]
  \centering
  \includegraphics[width=\textwidth]{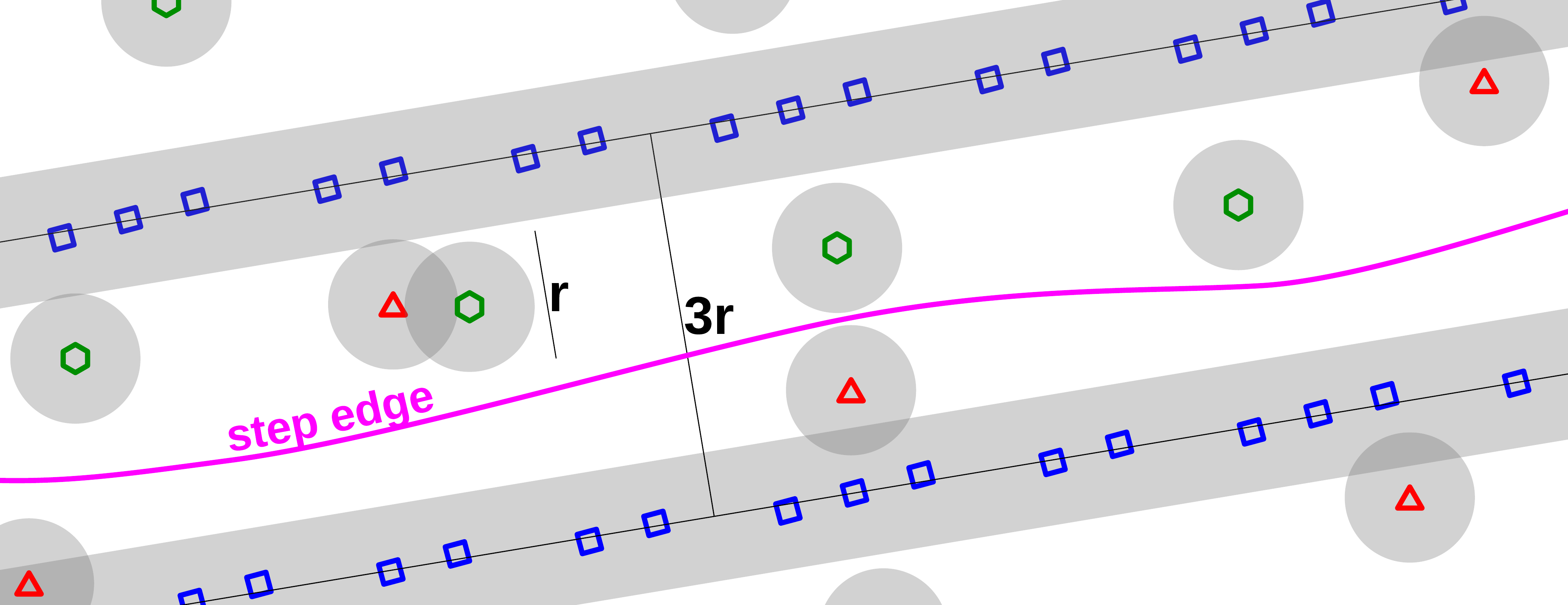}
  \caption{
    The points in $E_1$ are depicted by squares, while the points in $E_2$ and $E_3$ are respectively depicted by triangles and hexagons.
    The two lines are at distance at least $3r$ from each other, as well as any two triangles and any two hexagons.
    The step edge goes between the two lines and stay at distance at least $r/2$ from any point in $E(\vec{s})$, that is, outside the shaded region.
  }
  \label{fig:step_edge}
  \end{figure}

  {\bf First step.}
  Let us shift by $\vec{s}$ the part of $E$ on the right\footnote{We fix an orientation of the plane $E$ and keep the same orientation when we shift it.} of the previous curve, that is, we perform on $\mathcal{T}$ the flips which correspond to the points of $E(\vec{s})$ on the right of this curve.
  Let $\mathcal{T}_1$ denote the resulting tiling.
  We claim that $\mathcal{T}_1$ and $\mathcal{T}$ have the same $r$-atlas.
  On the one hand, every $r$-map of $\mathcal{T}$ can be found on the left side of the step (that is, in $\mathcal{T}$) because $\mathcal{T}$ is repetitive (any pattern which occurs once reoccurs at uniformly bounded distance from any point of the tiling).
  On the other hand, consider a $r$-map of $\mathcal{T}_1$.
  If it on the left side of the step, then it is in $\mathcal{T}$, thus in its $r$-atlas.
  If it is on the right side of the step, then it is in a tiling of slope $E+\vec{s}$, which has the same $r$-atlas by Prop.~\ref{prop:LI_classes}.
  It is thus also in the $r$-atlas of $\mathcal{T}$.
  If it crosses the curve defining the step, then since there is no point of $E(\vec{s})$ at distance less than $r/2$ from this curve, then this $r$-map can be seen as a pattern either of the tiling of slope $E$ or of the one of slope $E+\vec{s}$ (this is exactly what we cared about when defining the step).
  In both case it is in the $r$-atlas of $\mathcal{T}$.\\
 
  {\bf Next steps.}
  We proceed by induction.
  Let $\mathcal{T}_k$ be the tiling obtained after $k$ steps.
  It has the same $r$-atlas as $\mathcal{T}$.
  Its $i$-th step coincide with the tiling of thickness $1$ and slope $E+i\vec{s}$, either on a half plane for $i=0$ and $i=k$, or on a stripe otherwise.
  We obtain $\mathcal{T}_{k+1}$ by proceeding on the $k$-th step of $\mathcal{T}_k$ as we did on $\mathcal{T}$ in order to obtain $\mathcal{T}_1$ (we simply take care that the curve which defines the $k+1$-th step is far enough on the right of the one define the $k$-th step).
  Fig.~\ref{fig:staircase} illustrates this.\\
  
  \begin{figure}[hbtp]
    \centering
    \includegraphics[width=\textwidth]{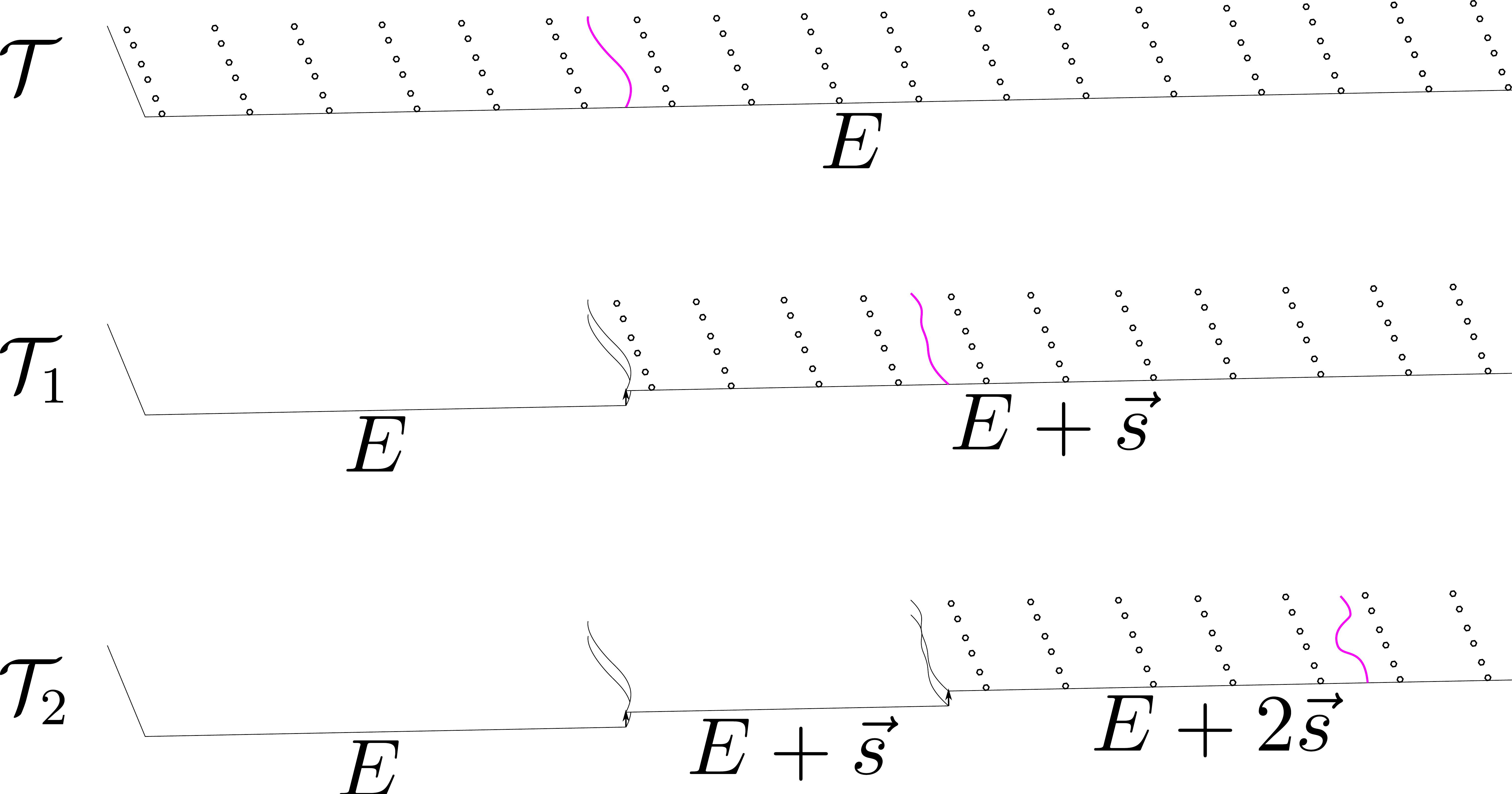}
    \caption{
      The creation of the three first steps of the staircase (from top to bottom).
      At each time, a curve (step edge) is drawn between two lines of points in $E_1$, and half of the tiling is shifted by $\vec{s}$.
      The changes from a steps to the next one are sparse enough to be undetectable by patterns of diameter $r$.
    }
    \label{fig:staircase}
  \end{figure}
  
  {\bf Staircase.}
  Let $\mathcal{T}_\infty$ be the tiling which coincides with $\mathcal{T}_k$ on its $k$ first steps.
  It has infinitely many steps: this is our staircase.
  It still has the same $r$-atlas as $\mathcal{T}$.
  It coincides with $\mathcal{T}$ on a half-plane, so must have slope $E$ if it is planar.
  But this cannot be the case because its $k$-th step is close to $E+k\vec{s}$, so cannot stay at bounded distance from $E$ for $k$ large enough.
  This proves that $E$ has no weak local rules of diameter $r$.
\end{proof}

%%%%%%%%%%%%%%%%%%%%%%%
\section{Three types of subperiods}
\label{sec:more}

In this section, we prove Theorem~\ref{th:main}.
The proof relies on Proposition~\ref{prop:at_most_two_subperiods} (previous section) and on two technical lemmas.
For the sake of clarity, let us first state these lemmas, then prove the theorem, and only after that prove the two lemmas.

\begin{lemma}\label{lem:thee_types_independence}
  If an irrational plane has three subperiods of different types, then these subperiods are independent.
\end{lemma}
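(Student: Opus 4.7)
The plan is a proof by contradiction: if the three subperiods are linearly dependent, I will produce a line of $E$ with a purely rational direction, contradicting irrationality of $E$. By permuting the indices $\{1,2,3,4\}$, I may assume the three subperiods are of types $1$, $2$ and $3$ (the ``missing'' type is $4$), and write them as
\begin{align*}
S_1:\ & \alpha_1 G_{23}+\alpha_2 G_{24}+\alpha_3 G_{34}=0,\\
S_2:\ & \beta_1 G_{13}+\beta_2 G_{14}+\beta_3 G_{34}=0,\\
S_3:\ & \gamma_1 G_{12}+\gamma_2 G_{14}+\gamma_3 G_{24}=0.
\end{align*}

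Any two subperiods of distinct types are linearly independent: they share exactly one Grassmann coordinate, so were one a scalar multiple of the other, both would reduce to a single-term equation $cG_{ij}=0$, contradicting nondegeneracy. Hence in any non-trivial relation $aS_1+bS_2+cS_3=0$, all three coefficients $a$, $b$, $c$ are non-zero. Since $G_{23}$, $G_{13}$, $G_{12}$ each appear in exactly one of the $S_i$, inspecting their coefficients in the relation gives $a\alpha_1=b\beta_1=c\gamma_1=0$, hence $\alpha_1=\beta_1=\gamma_1=0$. Each $S_i$ then becomes a rational linear relation between a pair from $\{G_{14},G_{24},G_{34}\}$, with both surviving coefficients non-zero (again by nondegeneracy).

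Next, I invoke Proposition~5 of \cite{Bedaride-Fernique-2015}, recalled just after Definition~\ref{def:subperiod}, to turn each reduced subperiod into the existence of a line of $E$ with three integer coordinates. Unwinding the conventions for each type, $S_1$, $S_2$ and $S_3$ respectively yield line directions
\[
(x_1,\alpha_3,-\alpha_2,0),\qquad (\beta_3,x_2,-\beta_2,0),\qquad (\gamma_3,-\gamma_2,x_3,0)
\]
in $E$, all with fourth coordinate zero. They therefore all lie in $E\cap(\mathbb{R}^3\times\{0\})$, which is at most one-dimensional (otherwise $G_{14}=G_{24}=G_{34}=0$, contradicting nondegeneracy), so the three directions are pairwise proportional.

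Finally, matching the third coordinates of $(x_1,\alpha_3,-\alpha_2,0)$ and $(\beta_3,x_2,-\beta_2,0)$ reads off the proportionality ratio $\alpha_2/\beta_2\in\mathbb{Q}$, and matching first coordinates then gives $x_1=\alpha_2\beta_3/\beta_2\in\mathbb{Q}$. Hence $E$ contains a line with all four coordinates rational, violating irrationality. Other triples of types reduce to this one by an index permutation. I expect the main obstacle to be the sign and index bookkeeping in applying Proposition~5 consistently across the three types; the conceptual core is simply that a linear dependence among three different-type subperiods forces the corresponding integer directions into a common coordinate hyperplane, which then forces a rational direction in $E$.
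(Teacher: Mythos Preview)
Your proof is correct and follows essentially the same line as the paper's: both argue that a non-trivial dependence forces the ``private'' coefficients $\alpha_1,\beta_1,\gamma_1$ to vanish, and then extract a rational direction in $E$. The only difference is cosmetic: the paper observes directly that the reduced relations make $G_{14},G_{24},G_{34}$ pairwise commensurate and then uses the fact that $(G_{14},G_{24},G_{34},0)$ always lies in $E$, whereas you invoke Proposition~5 of \cite{Bedaride-Fernique-2015} and a dimension count on $E\cap(\mathbb{R}^3\times\{0\})$ to reach the same rational line.
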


\begin{lemma}\label{lem:fourth_subperiod}
  If an irrational plane is characterized by three subperiods of different types and a coincidence, then it is actually characterized solely by its subperiods.
\end{lemma}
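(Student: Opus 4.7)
The plan is to show that $E$ admits a fourth subperiod, of the type missing from the three given; once this is established, the characterization by subperiods follows by a dimension count. By relabeling indices, assume the three subperiods are of types $1$, $2$, $3$, so that we seek a subperiod of type $4$.

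Using the three given subperiods to express $G_{23}$, $G_{14}$, and $G_{12}$ as $\mathbb{Q}$-linear combinations of $x := G_{13}$, $y := G_{24}$, $z := G_{34}$, both the Pl\"ucker relation and the coincidence equation (Proposition~\ref{prop:coincidence2}) become quadratic forms in $(x,y,z)$, defining conics $Q_P$ and $Q_C$ in the resulting $\mathbb{P}^2$. The planes satisfying the three subperiods, Pl\"ucker, and the coincidence then correspond, via Grassmann coordinates, exactly to the common zeros $Q_P \cap Q_C$.

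Since $E$ is irrational, its Grassmann ratios are not all rational, so the non-trivial $\mathbb{Q}$-Galois conjugate $E'$ of $E$ is a distinct projective point which nonetheless satisfies the same rational equations, and hence also lies in $Q_P \cap Q_C$. The hypothesis that $E$ is characterized (up to the inherent algebraic conjugacy) by the three subperiods and the coincidence forces $Q_P \cap Q_C = \{E, E'\}$ as a $\mathbb{Q}$-variety. The line $\ell$ through $E$ and $E'$ in $\mathbb{P}^2$ is therefore Galois-stable over $\mathbb{Q}$, hence defined by a rational linear equation $\rho x + \sigma y + \tau z = 0$ vanishing on $E$. Inverting the substitution via a $3\times 3$ rational linear system (whose determinant, expressible in the subperiod coefficients, is generically non-zero), this equation is equivalent to $\alpha G_{12} + \beta G_{13} + \gamma G_{23} = 0$ with $\alpha, \beta, \gamma \in \mathbb{Q}$---the desired type-$4$ subperiod of $E$.

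With four subperiods, one of each type, which are rationally independent by the same kind of computation used in Lemma~\ref{lem:thee_types_independence}, we cut the ambient $\mathbb{P}^5$ down to a $\mathbb{P}^1$. The Pl\"ucker quadric restricts to a degree-$2$ divisor on this $\mathbb{P}^1$, so at most two projective points satisfy the four subperiods together with Pl\"ucker; since $E$ and $E'$ are both solutions, this divisor equals $\{E, E'\}$, and $E$ is characterized by its subperiods alone.

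The hard part is handling degenerate configurations: the $3 \times 3$ inversion may be singular for special vanishings of subperiod coefficients, and the conics $Q_P, Q_C$ may themselves degenerate (e.g.\ splitting into pairs of lines). In those subcases one must adapt the argument---either by choosing a different triple of free Grassmann coordinates among the six (using a different inversion of the three given subperiods), or by analysing the pencil $\lambda Q_P + \mu Q_C$ directly to locate a rational line through $\{E, E'\}$---but in every case a rational linear relation in $(x,y,z)$ can be exhibited, and its pull-back yields the desired type-$4$ subperiod.
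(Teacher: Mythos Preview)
Your overall strategy---parametrize the locus cut out by the three subperiods as a $\mathbb{P}^2$, view the Pl\"ucker and coincidence relations as two conics $Q_P,Q_C$ there, and extract a Galois-stable rational line through the conjugates of $E$ as the missing fourth subperiod---is conceptually attractive and genuinely different from the paper's explicit case analysis. But there is a real gap at its core.

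You assert that $Q_P\cap Q_C=\{E,E'\}$, where $E'$ is \emph{the} non-trivial $\mathbb{Q}$-Galois conjugate of $E$. This presupposes that $E$ has degree exactly~$2$ over~$\mathbb{Q}$. However, two plane conics meet in up to four points by B\'ezout, so the hypothesis that the system is zero-dimensional only yields $[\mathbb{Q}(E):\mathbb{Q}]\le 4$. If $E$ had three or four conjugates in general position, there would be no Galois-stable line through them, and your construction of the fourth subperiod collapses. Quadraticness of the Grassmann coordinates is precisely what the paper establishes through its long case study (subcases A1--A8 and B1--B8); you are assuming the conclusion of that computation rather than proving it. In fact the paper's case~B---which in your labeling is exactly the situation where the distinguished index of the coincidence equals the missing subperiod type~$4$---turns out to be \emph{impossible}: every subcase forces $E$ to be rational. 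Your argument does not detect this dichotomy between the two cases at all.

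There are also two secondary issues you flag but do not close. Expressing $G_{23},G_{14},G_{12}$ linearly in $(x,y,z)=(G_{13},G_{24},G_{34})$ already requires certain subperiod coefficients to be non-zero; when they vanish one must re-parametrize, and this is exactly what generates the paper's subcase split. And the ``inversion'' back from a relation $\rho x+\sigma y+\tau z=0$ to a genuine type-$4$ relation $\alpha G_{12}+\beta G_{13}+\gamma G_{23}=0$ requires a $3\times 3$ determinant in the subperiod coefficients to be non-zero; in the paper this determinant is computed and shown non-zero in each surviving subcase, and that is what yields independence of the four subperiods. Invoking Lemma~\ref{lem:thee_types_independence} is not enough, since that lemma handles only three subperiods of different types, and four subperiods of four types need not be independent (the Ammann--Beenker slope is a cautionary example).
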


\begin{proof}[Proof of Theorem~\ref{th:main}]
  Assume that $E$ is an irrational plane which admits weak local rules.
  According to Proposition~\ref{prop:coincidence}, it is determined by finitely many coincidences.
  We want to show that $E$ is actually determined by its subperiods.
  According to Proposition \ref{prop:at_most_two_subperiods}, we know that $E$ must have three subperiods of different type.
  Lemma~\ref{lem:thee_types_independence} ensures that these subperiods are independent.
  Since the space of two-dimensional planes in $\mathbb{R}^4$ has dimension $4$, these three subperiods form a one-dimensional system of equations.
  But since coincidences determine a zero-dimensional system, we can find a coincidence which, together with these three subperiods, form a zero-dimensional system, that is, characterizes the slope up to algebraic conjugacy.
  Lemma~\ref{lem:fourth_subperiod} then ensures that the plane is characterized, up to algebraic conjugacy, by its subperiods.
\end{proof}

In other terms, subperiods are as powerful as weak local rules.
Corollary~\ref{cor:quadratic} then just comes from the fact that the plane is characterized by linear rational equations (the subperiods) and a single quadratic rational equation: the Plücker relation.
Let us now prove the two technical lemmas.

\begin{proof}[Proof of Lemma~\ref{lem:thee_types_independence}]
  First, assume that two subperiods, say of type $1$ and $2$, are dependent:
  $$
  aG_{23}+bG_{24}+cG_{34}=0,
  $$
  $$
  dG_{13}+eG_{34}+fG_{14}=0,
  $$
  where the coefficients are rational.
  The dependence yields \mbox{$a=b=d=f=0$}, thus $c\neq 0$ and $e\neq 0$, whence $G_{34}=0$.
  This is forbidden because $E$ is nondegenerate.\\
  Now, assume that there are three dependent subperiods, say of type $1$, $2$ and $3$.
  The two first are written above, and the last one is
  $$
  gG_{12}+hG_{14}+iG_{24}=0,
  $$
  where the coefficients are rational.
  The dependence yields $a=d=g=0$, so that the equations yield that $G_{14}$, $G_{24}$ and $G_{34}$ are commensurate.
  But one checks that the vector $(G_{14},G_{24},G_{34},0)$ always belongs to $E$.
  This contradicts the irrationality of $E$.
  Hence three subperiods of different type are independent.
\end{proof}

\begin{proof}[Proof of Lemma~\ref{lem:fourth_subperiod}]
  We first prove that the Grassmann coordinates of such a plane can be chosen in the same quadratic field.
  This will yield a linear rational relation between any three Grassmann coordinates, in particular a subperiod of the fourth type.
  We then prove that this fourth subperiod is independent from the three first ones, so that all together they characterize the plane.\\

  \noindent {\em W.l.o.g.}, the coincidence equation can be written
  $$
  \circ G_{12}G_{34}+\circ G_{13}G_{24}+\circ G_{14}G_{24}+\circ G_{14}G_{34}+\circ G_{24}G_{34}=0,
  $$
  where the symbol $\circ$ stands for any rational number (this notation shall be used again in what follows).
  This equation being invariant (up to the coefficients) under any permutation of the indices $1$, $2$ and $3$ (by using the Plücker relation), there is two cases, depending whether there is subperiod of type $4$ (case A) or not (case B).
  In case A, one can assume (again, thanks to the invariance under permutation of the indices $1$, $2$ and $3$) that the two other subperiods have type $2$ and $3$.\\
  
  {\em Disclaimer}: it unfortunately seems hard to further use symmetry to shorten the following case study, although all the considered cases behave similarly\ldots

  \paragraph{Grassmann coordinates are quadratic: case A.}~\\
  The three subperiods are
\begin{eqnarray*}
pG_{23}&=&aG_{12}+bG_{13},\\
qG_{24}&=&cG_{12}+dG_{14},\\
rG_{34}&=&eG_{13}+fG_{14}.
\end{eqnarray*}
When $p$, $q$ or $r$ are non-zero, one can normalize them to one.
According to this, we make eight subcases, with three of which being eventually excluded.\\
{\bf Subcase A1}: $pqr\neq 0$.\\
Set $G_{12}=y$, $G_{13}=x$ and normalize to $G_{14}=1$.
The subperiods yield
$$
G_{23}=ay+bx,\qquad
G_{24}=cy+d,\qquad
G_{34}=ex+f.
$$
The Plücker relation becomes
$$
y(ex+f)=x(cy+d)-(ay+bx),
$$
that one simply writes (with again $\circ$ denoting a generic rational number)
$$
\circ xy+\circ x+\circ y=0.
$$
Since the plane is characterized, the system formed by the three subperiods, the Plücker relation and the coincidence equation is zero-dimensional.
The Plücker relation thus not reduces to $0=0$, that is, the coefficients of $xy$, $x$ and $y$ are not all equal to zero.
Moreover, since the plane is nondegenerate, {\em i.e.}, its Grassmann coordinates are non-zero, there is actually at most one of the coefficients of $xy$, $x$ and $y$ that can be equal to zero.
We can thus write
$$
x=\frac{\circ y}{\circ y+\circ}
$$
and rewrite the coincidence equation
$$
\circ xy+\circ x+\circ y+\circ=0.
$$
Using then the expression for $x$ obtained from the Plücker relation, and reducing to the same (non-zero) denominator, we get the equation
$$
\circ y^2+\circ y+\circ=0.
$$
This yields that $y$ is a quadratic number.
The expression for $x$ obtained from the Plücker relation then yields that $x$ belongs to the same quadratic number field.
The subperiods finally yields the same for all the other Grassmann coordinates.\\
{\bf Subcase A2}: $p=0$, $qr\neq 0$.\\
Set $G_{14}=x$, $G_{23}=y$ and normalize to $G_{12}=1$.
The subperiods yield
$$
G_{13}=\circ,\qquad
G_{24}=\circ x+\circ,\qquad
G_{34}=\circ x+\circ.
$$
The Plücker and coincidence relations respectively become
$$
\circ xy+\circ x+\circ=0
\qquad\textrm{and}\qquad
\circ x^2+\circ x+\circ x=0.
$$
The second equation yiels that $x$ is quadatic, and the first one then yields that $y$ is in the same number field.
The subperiods finally yield that all the other Grassmann coordinates are also in this number field.\\
{\bf Subcase A3}: $q=0$, $pr\neq 0$.\\
Set $G_{24}=x$, $G_{13}=y$ and normalize to $G_{12}=1$.
The subperiods yield
$$
G_{23}=\circ y+\circ,\qquad
G_{14}=\circ,\qquad
G_{34}=\circ y+\circ.
$$
The Plücker and coincidence relations respectively become
$$
\circ xy+\circ y+\circ=0
\qquad\textrm{and}\qquad
\circ xy+\circ x+\circ y+\circ=0.
$$
We use the first equation to express $x$ as a function of $y$ and plug this into the second equation to get that $x$ is quadratic.
We then deduce that all the remaining Grassmann coordinates also belong to this quadratic field.\\
{\bf Subcase A4}: $r=0$, $pq\neq 0$.\\
Set $G_{34}=x$, $G_{12}=y$ and normalize to $G_{13}=1$.
The subperiods yield
$$
G_{23}=\circ y+\circ,\qquad
G_{24}=\circ y+\circ,\qquad
G_{14}=\circ.
$$
The Plücker and coincidence relations can be rewritten as in Subcase A3.
We then deduce that all the remaining Grassmann coordinates also belong to this quadractic field.\\
{\bf Subcase A5}: $p\neq 0$, $q=r=0$\\
Set $G_{24}=x$, $G_{34}=y$ and normalize to $G_{12}=1$.
The subperiods yield
$$
G_{23}=\circ,\qquad
G_{14}=\circ,\qquad
G_{13}=\circ.
$$
The Plücker and coincidence relations can be rewritten as in Subcase A3.
$$
\circ x+\circ y+\circ=0
\qquad\textrm{and}\qquad
\circ xy+\circ x+\circ y=0.
$$
We use the first equation to express $x$ as a function of $y$ and plug this into the second equation to get that $x$ is quadratic.
We then deduce that all the remaining Grassmann coordinates also belong to this quadractic field.\\
{\bf Subcase A6}: $q\neq 0$, $p=r=0$.\\
Set $G_{23}=x$, $G_{34}=y$ and normalize to $G_{12}=1$.
The subperiods yield
$$
G_{13}=\circ,\qquad
G_{24}=\circ,\qquad
G_{14}=\circ.
$$
The Plücker and coincidence relations respectively become
$$
\circ x+\circ y+\circ=0
\qquad\textrm{and}\qquad
\circ y+\circ=0.
$$
This yields that all the Grassmann coordinates are rational.
This subcase is excluded since the plane is irrationnal.\\
{\bf Subcase A7}: $r\neq 0$, $p=q=0$.\\
Set $G_{23}=x$, $G_{24}=y$ and normalize to $G_{12}=1$.
The subperiods yield
$$
G_{13}=\circ,\qquad
G_{14}=\circ,\qquad
G_{34}=\circ.
$$
The Plücker and coincidence relations can be rewritten as in Subcase A6.
This yields that all the Grassmann coordinates are rational, what is again excluded.\\
{\bf Subcase A8}: $p=q=r=0$.\\
The subperiods yield $G_{12}=0$: it is excluded because the plane is nondegenerate.

\paragraph{Grassmann coordinates are quadratic: case B.}~\\
The three subperiods are
\begin{eqnarray*}
pG_{23}&=&aG_{24}+bG_{34}\\
qG_{12}&=&cG_{14}+dG_{24}\\
rG_{13}&=&eG_{14}+fG_{34}.
\end{eqnarray*}
We make again eight subcases, but here, they eventually will be all excluded.\\
{\bf Subcase B1}: $pqr\neq 0$.\\
Set $G_{24}=x$, $G_{34}=y$ and normalize to $G_{14}=1$.
The subperiods yield
$$
G_{23}=\circ x+\circ y,\qquad
G_{12}=\circ x+\circ,\qquad
G_{13}=\circ y+\circ.
$$
The Plücker and coincidence relations both become
$$
\circ xy+\circ x+\circ y=0.
$$
We use the first equation to express $x$ as a function of $y$ and plug this into the second equation to get an algebraic fraction in $y$ whose numerator is $\circ y^2+\circ y$.
This yields that $y$, and then all the other Grassmann coordinates, are rational.
This is is excluded because the plane is irrationnal.\\
{\bf Subcase B2}: $p=0$, $qr\neq 0$.\\
Set $G_{23}=x$, $G_{14}=y$ and normalize to $G_{24}=1$.
The subperiods yield
$$
G_{34}=\circ,\qquad
G_{12}=\circ y+\circ,\qquad
G_{13}=\circ y+\circ.
$$
The Plücker and coincidence relations respectively become
$$
\circ xy+\circ y+\circ =0
\qquad\textrm{and}\qquad
\circ y+\circ=0.
$$
This yiels that $y$, and then all the other Grassmann coordinates, are rational: this is excluded.\\
{\bf Subcase B3}: $q=0$, $pr\neq 0$.\\
Set $G_{12}=x$, $G_{34}=y$ and normalize to $G_{24}=1$.
The subperiod yield
$$
G_{23}=\circ y+\circ,\qquad
G_{14}=\circ,\qquad
G_{13}=\circ y+\circ.
$$
The Plücker and coincidence relations both become
$$
\circ xy+\circ y+\circ =0.
$$
We use the first equation to express $x$ as a function of $y$ and plug this into the second equation to get an algebraic fraction in $y$ whose numerator is $\circ x+\circ$.
This yields that $x$, and then all the other Grassmann coordinates, are rational: this is excluded.\\
{\bf Subcase B4}: $r=0$, $pq\neq 0$.\\
Set $G_{13}=x$, $G_{24}=y$ and normalize $G_{14}=1$.
The subperiods yield
$$
G_{23}=\circ y+\circ,\qquad
G_{12}=\circ y+\circ,\qquad
G_{34}=\circ.
$$
The Plücker and coincidence relations can be rewritten as in Subcase B3.
This yields that all the Grassmann coordinates are rational: this is excluded.\\
{\bf Subcase B5}: $p\neq 0$, $q=r=0$.\\
Set $G_{12}=x$, $G_{13}=y$ and normalize to $G_{14}=1$.
The subperiods yield
$$
G_{23}=\circ,\qquad
G_{24}=\circ,\qquad
G_{34}=\circ.
$$
The Plücker and coincidence relations both become
$$
\circ x+\circ y+\circ 1=0.
$$
This yields that all the Grassmann coordinates are rational: this is excluded.\\
{\bf Subcase B6}: $q\neq 0$, $p=r=0$.\\
Set $G_{13}=x$, $G_{23}=y$ and normalize to $G_{24}=1$.
The subperiods yield
$$
G_{34}=\circ,\qquad
G_{12}=\circ,\qquad
G_{14}=\circ.
$$
The Plücker and coincidence relations respectively become
$$
\circ x+\circ y+\circ =0
\qquad\textrm{et}\qquad
\circ x+\circ=0.
$$
This yields that all the Grassmann coordinates are rational: this is excluded.\\
{\bf Subcase B7}: $r\neq 0$, $p=q=0$.\\
Set $G_{12}=x$, $G_{23}=y$ and normalize to $G_{24}=1$.
The subperiods yield
$$
G_{34}=\circ,\qquad
G_{14}=\circ,\qquad
G_{13}=\circ.
$$
The Plücker and coincidence relations can be rewritten as in Subcase B6.
This yields that all the Grassmann coordinates are rational: this is excluded.\\
{\bf Subcase B8}: $p=q=r=0$.\\
This subcase is excluded for the same reason as Subcase A8.

\paragraph{The four equations are independent.}\hfill\\
The only subcases to consider are A1 to A5.
Since the Grassmann coordinates are in the same quadratic field, there is a type $1$ subperiod:
$$
\alpha G_{23}+\beta G_{24}+\gamma G_{34}=0.
$$
{\bf Subcase A1}:\\
The Grassmann coordinates are
$$
G_{12}=y,\quad
G_{13}=x,\quad
G_{14}=1,\quad
G_{23}=ay+bx,\quad
G_{24}=cy+d,\quad
G_{34}=ex+f.
$$
The fourth subperiod becomes
$$
\alpha (ay+bx)+\beta (cy+d)+\gamma(ex+f)=0,
$$
that is,
$$
(a\alpha+c\beta)y+(b\alpha+e\gamma)x+(d\beta+f\gamma)=0.
$$
We use the Plücker relation to express $y$ as a function of $x$:
$$
y=\frac{(d-b)x}{(e-c)x+(a+f)}.
$$
By plugging this into the fourth subperiod, we get $Ax^2+Bx+C=0$ with
\begin{eqnarray*}
A&=&(e-c)(b\alpha+e\gamma),\\
B&=&(d-b)(a\alpha+c\beta)+(a+f)(b\alpha+e\gamma)+(e-c)(d\beta+f\gamma),\\
C&=&(a+f)(d\beta+f\gamma).
\end{eqnarray*}
In terms of matrices:
$$
\left(\hspace{-5pt}\begin{array}{ccc}
b(e-c) & 0 & e(e-c)\\
a(d-b)+b(a+f) & c(d-b)+d(e-c) & e(a+f)+f(e-c)\\
0 & d(a+f) & f(a+f)
\end{array}\hspace{-5pt}\right)
\hspace{-3pt}
\left(\hspace{-5pt}\begin{array}{c}
\alpha\\\beta\\\gamma
\end{array}\hspace{-5pt}\right)
\hspace{-3pt}=\hspace{-3pt}
\left(\hspace{-5pt}\begin{array}{c}
A\\B\\C
\end{array}\hspace{-5pt}\right).
$$
Since $x$ is irrational (otherwise all the other Grassmann coordinates, hence the plane, would be rational), the triple $(A,B,C)$ is unique (up to a common multiplicative factor).
The above matrix has thus non-zero determinant.
To compute it, we first factor $(e-c)$ in the firs line and $(a+f)$ in the third one.
We then substract from the second line $(a+f)$ times the first one and $(e-c)$ times the third one.
We can then factor the second line by $(d-b)$ and compute the determinant of the matrix
$$
\left(\begin{array}{ccc}
b & 0 & e\\
a & c & 0\\
0 & d & f
\end{array}\right).
$$
We finally get
$$
(e-c)(a+f)(d-b)(bcf+ade).
$$
Now, the matrix of the linear system formed by the four subperiods (the variables being the six Grassmann coordinates) is
$$
\left(\begin{array}{cccccc}
a & b & 0 & -p & 0 & 0\\
c & 0 & d & 0 & -q & 0\\
0 & e & f & 0 & 0 & -r\\
0 & 0 & 0 & \alpha & \beta & \gamma
\end{array}\right).
$$
In particular, the minor of size $4$ formed by the three first columns and one of the columns whose fourth entry is non-zero (at most one of the rational $\alpha$, $\beta$ and $\gamma$ can be equal to zero, since they are the coefficients of a subperiod) is proportional to $bcf+ade$.
It is thus non-zero {\em i.e.}, the four equations are independent.\\
{\bf Subcase A2}:\\
The Grassmann coordinates are
$$
G_{12}=1,\quad
G_{13}=-\frac{a}{b},\quad
G_{14}=x,\quad
G_{23}=y,\quad
G_{24}=c+dx,\quad
G_{34}=fx-\frac{ea}{b}.
$$
The fourth subperiod becomes
$$
\alpha y+\beta (c+dx)+\gamma(fx-ea/b)=0,
$$
that is,
$$
\alpha y+(d\beta+f\gamma)x+c\beta-\frac{ea}{b}\gamma=0.
$$
We use the Plücker relation to express $x$ as a function of $y$:
$$
x=\frac{a(e-c)}{bf+ad+by}.
$$
By plugging this into the fourth subperiod, we get $Ay^2+By+C=0$ with
\begin{eqnarray*}
A&=&b\alpha,\\
B&=&(bf+ad)\alpha+b(c\beta-\frac{ea}{b}\gamma),\\
C&=&(d\beta+f\gamma)a(e-c)+(bf+ad)(c\beta-\frac{ea}{b}\gamma).
\end{eqnarray*}
In terms of matrices:
$$
\left(\hspace{-5pt}\begin{array}{ccc}
b & 0 & 0\\
bf+ad & bc & -ea\\
0 & ad(e-c)+c(bf+ad) & af(e-c)-(bf+ad)\frac{ea}{b}
\end{array}\hspace{-5pt}\right)
\hspace{-3pt}
\left(\hspace{-5pt}\begin{array}{c}
\alpha\\\beta\\\gamma
\end{array}\hspace{-5pt}\right)
\hspace{-3pt}=\hspace{-3pt}
\left(\hspace{-5pt}\begin{array}{c}
A\\B\\C
\end{array}\hspace{-5pt}\right).
$$
The determinant of this matrix is non-zero.
It is equal to
$$
ab(e-c)(ade+bcf).
$$
This ensures as in Subcase A1 that the four equationss are independent.\\
{\bf Subcase A3}:\\
The Grassmann coordinates are
$$
G_{12}=1,\quad
G_{13}=y,\quad
G_{14}=-\frac{c}{d},\quad
G_{23}=a+by,\quad
G_{24}=x,\quad
G_{34}=ey-\frac{cf}{d}.
$$
The fourth subperiod becomes
$$
(b\alpha+e\gamma)y+\beta x+(a\alpha-\frac{cf}{d}\gamma)=0.
$$
We use the Plücker relation to express $y$ as a function of $x$:
$$
y=-\frac{c(a+f)}{dx-de+bc}.
$$
By plugging this into the fourth subperiod, we get $Ax^2+Bx+C=0$ with
\begin{eqnarray*}
A&=&d\beta\\
B&=&(bc-de)\beta+ad\alpha-cf\gamma\\
C&=&-(b\alpha+e\gamma)c(a+f)+(bc-de)(a\alpha-\frac{cf}{d}\gamma).
\end{eqnarray*}
In terms of matrices:
$$
\left(\hspace{-5pt}\begin{array}{ccc}
0 & d & 0\\
ad & bc-de & -cf\\
-(ade+bcf) & 0 & -\frac{c}{d}(ade+bcf)
\end{array}\hspace{-5pt}\right)
\hspace{-3pt}
\left(\hspace{-5pt}\begin{array}{c}
\alpha\\\beta\\\gamma
\end{array}\hspace{-5pt}\right)
\hspace{-3pt}=\hspace{-3pt}
\left(\hspace{-5pt}\begin{array}{c}
A\\B\\C
\end{array}\hspace{-5pt}\right).
$$
The determinant of this matrix is non-zero.
It is equal to
$$
cd(a+f)(ade+bcf).
$$
This ensures as in Subcase A1 that the four equations are independent.\\
{\bf Subcase A4}:\\
The Grassmann coordinates are
$$
G_{12}=y,\quad
G_{13}=1,\quad
G_{14}=-\frac{e}{f},\quad
G_{23}=ay+b,\quad
G_{24}=cy-\frac{ed}{f},\quad
G_{34}=x.
$$
The fourth subperiod becomes
$$
(a\alpha+c\beta)y+\gamma x+(b\alpha-\frac{ed}{f}\beta)=0.
$$
We use the Plücker relation to express $y$ as a function of $x$:
$$
y=\frac{e(b-d)}{fx-fc-ea}.
$$
By plugging this into the fourth subperiod, we get $Ax^2+Bx+C=0$ with
\begin{eqnarray*}
A&=&f\gamma\\
B&=&bf\alpha-ed\beta-(cf+ae)\gamma\\
C&=&-(ade+bcf)\alpha+\frac{e}{f}(ade+bcf)\beta
\end{eqnarray*}
In terms of matrices:
$$
\left(\hspace{-5pt}\begin{array}{ccc}
0 & 0 & f\\
bf & -ed & -cf-ae\\
-(ade+bcf) & \frac{e}{f}(ade+bcf) & 0
\end{array}\hspace{-5pt}\right)
\hspace{-3pt}
\left(\hspace{-5pt}\begin{array}{c}
\alpha\\\beta\\\gamma
\end{array}\hspace{-5pt}\right)
\hspace{-3pt}=\hspace{-3pt}
\left(\hspace{-5pt}\begin{array}{c}
A\\B\\C
\end{array}\hspace{-5pt}\right).
$$
The determinant of this matrix is non-zero.
It is equal to
$$
ef(b-d)(ade+bcf).
$$
This ensures as in Subcase A1 that the four equations are independent.\\
{\bf Subcase A5}:\\
The Grassmann coordinates are
$$
G_{12}=1,\quad
G_{13}=\frac{cf}{ed},\quad
G_{14}=-\frac{c}{d},\quad
G_{23}=a+\frac{bcf}{ed},\quad
G_{24}=x,\quad
G_{34}=y.
$$
In this case, the Plücker relation {\em is} the fourth subperiod:
$$
y=\frac{cf}{ed}x-\frac{c}{d}(a+\frac{bcf}{ed}).
$$
Note that 
$$
G_{23}=\frac{ade+bcf}{ed}.
$$
Hence, $ade+bcf\neq 0$ because the plane is non-degenerated.
This ensures as in Subcase A1 that the four equations are independent.
\end{proof}

%\paragraph{Aknowledgment.} We warmly thank the referee, who contributed to greatly improve the paper.

%%%%%%%%%%%%%%%%%%%%%%%
\bibliographystyle{alpha}
\bibliography{quadratic}
\end{document}